\newtheorem{thm}{Theorem}[section]
\newtheorem{lem}[thm]{Lemma}
\newtheorem{exa}[thm]{Example}
\newcommand{\M}{\mathcal{M}}
\newcommand{\MM}{\widetilde{M}}
\newcommand{\red}{\mbox{\sf{redarc}}}
\newcounter{exacounter}
\begin{document}

\begin{center}
{\large \bf Stoimenow matchings avoiding multiple Catalan patterns simultaneously}
\end{center}

\begin{center}
Shuzhen Lv$^{a}$ and Sergey Kitaev$^{b}$
\\[6pt]

$^{a}$College of Mathematical Sciences \& Institute of Mathematics and Interdisciplinary Sciences, Tianjin Normal University, \\ Tianjin  300387, P. R. China\\[6pt]

$^{b}$Department of Mathematics and Statistics, University of Strathclyde, \\ 26 Richmond Street, Glasgow G1 1XH, UK\\[6pt]

Email:  $^{a}${\tt  lvshuzhenlsz@yeah.net},
           $^{b}${\tt sergey.kitaev@strath.ac.uk}
\end{center}

\noindent\textbf{Abstract.}
Motivated by Vassiliev’s knot invariants, Stoimenow introduced a special class of matchings, now known as Stoimenow matchings. These matchings have since been linked to various combinatorial structures enumerated by the Fishburn numbers. In a recent paper, a problem posed by Bevan et al.\ was addressed concerning the identification of subsets of Stoimenow matchings counted by the Catalan numbers. Five such subsets were presented, each defined by the avoidance of a single pattern, referred to as a Catalan pattern, within Stoimenow matchings.

In the present paper, we extend this line of research by enumerating all cases of simultaneous avoidance of sets of Catalan patterns in Stoimenow matchings. This comprehensive analysis reveals connections to nine integer sequences listed in the OEIS.\\[-3mm]

\noindent\textbf{\bf Keywords:} Stoimenow matching, pattern-avoidance\\[-3mm]

\noindent {\bf AMS Subject Classifications:} 05A05, 05A15.

\section{Introduction}

A {\it matching} on the set of integers \(\{1, 2, \ldots, 2n\}\) is a partition of that set into blocks of size 2, called {\it arcs}. For an arc \([a, b]\), where \(a < b\), \(a\) (resp., \(b\)) is called the \textit{opener} (resp., \textit{closer}) of the arc. We represent matchings by drawing them with openers (resp., closers) arranged in increasing order from left to right. A matching is said to be {\it Stoimenow} if there
are no occurrences of Type 1 or Type 2 arcs:

\begin{minipage}[b]{0.45\textwidth}
        \centering
        \begin{tikzpicture}[scale = 0.6]
    \draw (-0.4,0) -- (1.4,0);
    \draw [dashed](1.4,0) -- (4.6,0);
    \foreach \x in {0,1,3,4}
    {
        \filldraw (\x,0) circle (2pt);
    }
            \draw[black] (0,0) arc (180:0:2 and 1);
            \draw[black] (1,0) arc (180:0:1 and 0.5);
             \node[anchor=north west] at (-0.4, -0.1) {\scriptsize{$i$}};
             \node[anchor=north west] at (0.5, -0.1) {\scriptsize{$i+1$}};
             \node[anchor=north west] at (2.4, -0.1) {\scriptsize{$k$}};
             \node[anchor=north west] at (3.6, -0.1) {\scriptsize{$\ell$}};
             \node[anchor=north west] at (-2, 1.5) {{\small Type 1}};
        \end{tikzpicture}
        \vspace{0.2cm} 
        
    \end{minipage}
    \hfill 
    \begin{minipage}[b]{0.45\textwidth}
        \centering
        \begin{tikzpicture}[scale = 0.6]
           \draw[dashed] (-0.4,0) -- (2.6,0);
    \draw (2.6,0) -- (4.6,0);
    \foreach \x in {0,1,3,4}
    {
        \filldraw (\x,0) circle (2pt);
    }
            \draw[black] (0,0) arc (180:0:2 and 1);
            \draw[black] (1,0) arc (180:0:1 and 0.5);
             \node[anchor=north west] at (-0.4, -0.1) {\scriptsize{$k$}};
             \node[anchor=north west] at (0.6, -0.1) {\scriptsize{$\ell$}};
             \node[anchor=north west] at (2.6, -0.1) {\scriptsize{$j$}};
             \node[anchor=north west] at (3.3, -0.1) {\scriptsize{$j+1$}};
             \node[anchor=north west] at (-2, 1.5) {{\small Type 2}};
        \end{tikzpicture}
        \vspace{0.2cm}
    \end{minipage}

\vspace{-2mm}
\noindent
Stoimenow matchings are precisely the {\it regular linearized chord diagrams} introduced in~\cite{stoim}.
They are enumerated by the {\it Fishburn numbers} (OEIS~A022493~{\rm \cite{OEIS}}) and correspond bijectively to {\it {\bf (2+2)}-free posets}, {\it ascent sequences}, {\it Fishburn permutations} and {\it Fishburn matrices}~\cite{Bousquet-Claesson-Dukes-Kitaev-DukesParvi,DukPar}.

Let $\M_n$ be the set of all Stoimenow matchings with $n$ arcs, and denote its cardinality by  $|\M_n|$.
 We assume that a matching $M\in\M_n$ is formed by arcs $\{[a_i,b_i]\}_{i=1}^{n}$ and can be written as $M=\{[a_1,b_1],[a_2,b_2],\dots,[a_n,b_n]\}$.
A subset of $M$ of the form $\{[a_1,b_1],[a_2,b_2],\ldots,[a_k,b_k]\}$ with $k\geq 0$ is called (i) a $k$-{\it crossing} if $a_1<a_2<\cdots<a_k<b_1<b_2<\cdots <b_k$; (ii) a $k$-{\it noncrossing} if $a_1<b_1<a_2<b_2<\cdots<a_k<b_k$.

Bevan et al.~\cite{Bevan-Cheon-Kitaev} presented a hierarchy related to the Fishburn numbers (see \cite[Fig.~12]{Bevan-Cheon-Kitaev}). In connection with this hierarchy, they posed the problem of describing a subset of Stoimenow matchings counted by the {\it Catalan numbers} \(C_n = \frac{1}{n+1}\binom{2n}{n}\). This problem was solved in several ways in \cite{LvKitZhang}. Specifically, it was shown that avoidance of any of the patterns \(P_1\)--\(P_5\) introduced below in Stoimenow matchings yields objects counted by the Catalan numbers; these patterns are therefore called {\it Catalan patterns}. By avoidance of a pattern in a matching, we mean that the matching does not contain any submatching isomorphic to the pattern. \\[-1mm]
 \[
\begin{tikzpicture}[scale = 0.35] 
        \node at (3.5,-0.9) {\upshape {\footnotesize{$P_1$}}};
        \draw (-0.4,0) -- (7.4,0);
        \foreach \x in {0,1,2,3,4,5,6,7}
        {
            \filldraw (\x,0) circle (2pt);
        }
        \draw[black] (0,0) arc (180:0:1 and 1);
        \draw[black] (1,0) arc (180:0:2.5 and 1.5);
        \draw[black] (3,0) arc (180:0:0.5 and 1);
        \draw[black] (5,0) arc (180:0:1 and 1);
    \end{tikzpicture}
    \hspace{0.2cm}
    \begin{tikzpicture}[scale = 0.35] 
        \node at (3.5,-0.9) {\upshape {\footnotesize{$P_2$}}};
        \draw (-0.4,0) -- (7.4,0);
        \foreach \x in {0,1,2,3,4,5,6,7}
        {
            \filldraw (\x,0) circle (2pt);
        }
        \draw[black] (0,0) arc (180:0:1 and 1);
        \draw[black] (1,0) arc (180:0:1.5 and 1);
        \draw[black] (3,0) arc (180:0:1.5 and 1);
        \draw[black] (5,0) arc (180:0:1 and 1);
    \end{tikzpicture}
    \hspace{0.2cm}
    \begin{tikzpicture}[scale = 0.35] 
        \node at (3.5,-0.9) {\upshape {\footnotesize{$P_3$}}};
        \draw (-0.4,0) -- (7.4,0);
        \foreach \x in {0,1,2,3,4,5,6,7}
        {
            \filldraw (\x,0) circle (2pt);
        }
        \draw[black] (0,0) arc (180:0:0.5 and 1);
        \draw[black] (2,0) arc (180:0:1 and 1);
        \draw[black] (3,0) arc (180:0:1 and 1);
        \draw[black] (6,0) arc (180:0:0.5 and 1);
    \end{tikzpicture}
   \hspace{0.2cm}
    \begin{tikzpicture}[scale = 0.35] 
        \node at (3.5,-0.9) {\upshape {\footnotesize{$P_4$}}};
        \draw (-0.4,0) -- (7.4,0);
        \foreach \x in {0,1,2,3,4,5,6,7}
        {
            \filldraw (\x,0) circle (2pt);
        }
        \draw[black] (0,0) arc (180:0:0.5 and 1);
        \draw[black] (2,0) arc (180:0:1 and 1);
        \draw[black] (3,0) arc (180:0:1.5 and 1);
        \draw[black] (5,0) arc (180:0:1 and 1);
    \end{tikzpicture}
    \hspace{0.2cm}
    \begin{tikzpicture}[scale = 0.35] 
        \node at (3.5,-0.9) {\upshape {\footnotesize{$P_5$}}};
        \draw (-0.4,0) -- (7.4,0);
        \foreach \x in {0,1,2,3,4,5,6,7}
        {
            \filldraw (\x,0) circle (2pt);
        }
        \draw[black] (0,0) arc (180:0:1 and 1);
        \draw[black] (1,0) arc (180:0:1.5 and 1);
        \draw[black] (3,0) arc (180:0:1 and 1);
        \draw[black] (6,0) arc (180:0:0.5 and 1);
    \end{tikzpicture}
\]\\[-1.2cm]

In Sections~\ref{pairs-sec}--\ref{5-sec}, we enumerate all cases of multi-avoidance involving the patterns $P_1$--$P_5$, revealing numerous connections to known sequences in the OEIS~{\rm \cite{OEIS}}; see Table~\ref{tab-results} for a summary. In these sections, we denote
\[
A(x) = \sum_{n \geq 0} |\mathcal{M}_n(S)|\, x^n,
\]
where $S$ is the set of patterns under consideration, and $|\mathcal{M}_n(S)|$ is the cardinality of $\mathcal{M}_n(S)$, the set of matchings in $\mathcal{M}_n$ avoiding each pattern in $S$ simultaneously. We let $\M(S)=\cup_{n\geq0}\M_n(S)$.

\begin{table}
 	{
 		\renewcommand{\arraystretch}{1}
 		\setlength{\tabcolsep}{5pt}
 \begin{center} 
 		\begin{tabular}{|c| c| c| c| c| }
 			\hline
 		{\scriptsize Patterns}  & \scriptsize{Sequence ($n\geq 1$)}	& {\scriptsize OEIS}  & \scriptsize{G.f.}	& {\scriptsize Reference} \\
 			\hline
 			\hline
& &   && \\[-4mm]
\scriptsize{$(P_1, P_2)$} & 
\scriptsize{1,2,5,13,33,82,202,497,\ldots} & \scriptsize{A116703}   &
\scriptsize{$
    \frac{(1-x)^3}{1-4x+5x^2-3x^3}
$} & \scriptsize{\multirow{1.2}{*}{Theorem~\ref{thm-P1-P2}}}
\\[1mm]
\hline

& &   && \\[-4mm]
 \scriptsize{$(P_1, P_3)$} & \scriptsize{1,2,5,13,32,73,156,318,\ldots} & \scriptsize{New}  &
\scriptsize{$\frac{1-6x+15x^2-19x^3+13x^4-5x^5}{(1-x)^5(1-2x)}$} & \scriptsize{Theorem~\ref{thm-P1-P3}}
\\[1mm]
\hline

 \scriptsize{$(P_1, P_4)$} & \scriptsize{\multirow{3.6}{*}{1,2,5,13,33,81,193,449,\ldots}} & \scriptsize{\multirow{3.6}{*}{A005183}}   &
\scriptsize{\multirow{3.4}{2cm}{$\frac{1-4x+5x^2-x^3}{(1-x)(1-2x)^2}$}} & \scriptsize{\multirow{3.6}{*}{Theorem~\ref{thm-A005183}}}
\\
\cline{1-1}

 \scriptsize{$(P_1, P_5)$} & &   && 
\\
\cline{1-1}

 \scriptsize{$(P_2, P_3)$} & &   && 
\\
\hline

& &   && \\[-4mm]
  \scriptsize{$(P_2, P_4)$} & \scriptsize{\multirow{6}{*}{1,2,5,13,34,89,233,610,\ldots}} & \scriptsize{\multirow{6}{*}{A001519}}   &
\scriptsize{\multirow{5}{1.2cm}{$\frac{1-2x}{1-3x+x^2}$}} & \scriptsize{\multirow{6}{*}{Theorem~\ref{thm-A001519}}}
\\
\cline{1-1}

 \scriptsize{$(P_2, P_5)$} & &   && 
\\
\cline{1-1}

 \scriptsize{$(P_3, P_4)$} & &   && 
\\
\cline{1-1}

 \scriptsize{$(P_3, P_5)$} & &   && 
\\
\cline{1-1}

 \scriptsize{$(P_4, P_5)$} & &   && 
\\
\hline
& &   && \\[-4mm]
 \scriptsize{$(P_1, P_2, P_3)$} & \scriptsize{1,2,5,12,25,47,82,135,\ldots} & \scriptsize{A116722}   &
 \scriptsize{$\frac{1-4x+7x^2-5x^3+2x^4-x^5+x^6}{(1-x)^5}$} 
& \scriptsize{Theorem~\ref{thm-P1-P2-P3}}
\\[1mm]
\hline

 \scriptsize{$(P_1, P_2, P_4)$} & \scriptsize{\multirow{6.5}{*}{1,2,5,12,27,58,121,248,\ldots}} & \scriptsize{\multirow{6.5}{*}{A000325}}   &
\scriptsize{\multirow{6}{1.6cm}{$\frac{1-3x+3x^2}{(1-x)^2(1-2x)}$}} & \scriptsize{\multirow{6.5}{*}{Theorem~\ref{thm-A000325}}}
\\
\cline{1-1}

 \scriptsize{$(P_1, P_2, P_5)$} & &   && 
\\
\cline{1-1}

 \scriptsize{$(P_2, P_3, P_4)$} & &   && 
\\
\cline{1-1}

 \scriptsize{$(P_2, P_3, P_5)$} & &   && 
\\
\cline{1-1}

 \scriptsize{$(P_1, P_4, P_5)$} & &   && 
\\
\cline{1-1}

 \scriptsize{$(P_2, P_4, P_5)$} & &   && 
\\
\hline

 \scriptsize{$(P_1, P_3, P_4)$} & \scriptsize{\multirow{2.5}{*}{1,2,5,12,26,99,184,340,\ldots}} & \scriptsize{\multirow{2.5}{*}{A116725}}   &
\scriptsize{\multirow{2.5}{3.3cm}{$\frac{1-5x+10x^2-9x^3+3x^4-x^5}{(1-x)^4(1-2x)}$}} & \scriptsize{\multirow{2.5}{*}{Theorem~\ref{thm-A116725}}}
\\
\cline{1-1}

 \scriptsize{$(P_1, P_3, P_5)$} & &   && 
\\
\hline

 & &   && \\[-4mm]
\scriptsize{$(P_3, P_4, P_5)$} & \scriptsize{1,2,5,12,28,65,151,351,\ldots} & \scriptsize{A034943}   &
\scriptsize{$\frac{(1-x)^2}{1-3x+2x^2-x^3}$} 
& \scriptsize{Theorem~\ref{thm-P3-P4-P5}}
\\[1mm]
\hline

 \scriptsize{$(P_1, P_2, P_3, P_4)$} & \scriptsize{\multirow{3.6}{*}{1,2,5,11,21,36,57,85,\ldots}} & \scriptsize{\multirow{3.6}{*}{A050407}}   &
 \scriptsize{\multirow{3.6}{2cm}{$\frac{1-3x+4x^2-x^3}{(1-x)^4}$}} 
& \scriptsize{\multirow{3.6}{*}{Theorem~\ref{thm-A050407}}}
\\
\cline{1-1}

 \scriptsize{$(P_1, P_2, P_3, P_5)$} & &   && 
\\
\cline{1-1}

 \scriptsize{$(P_2, P_3, P_4, P_5)$} & &   && 
\\
\hline

 \scriptsize{$(P_1, P_2, P_4, P_5)$} & \scriptsize{\multirow{2.5}{*}{1,2,5,11,22,42,79,149,\ldots}} & \scriptsize{\multirow{2.5}{*}{New}}   &
 \scriptsize{\multirow{2.5}{2.5cm}{$\frac{1-4x+6x^2-3x^3-x^4}{(1-x)^3(1-2x)}$}}
& \scriptsize{\multirow{2.5}{*}{Theorem~\ref{thm-P1-P2-P4-P5, P1-P3-P4-P5}}}
\\
\cline{1-1}

 \scriptsize{$(P_1, P_3, P_4, P_5)$} & &   && 
\\
\hline
& &   && \\[-4mm]
 \scriptsize{$(P_1, P_2, P_3, P_4, P_5)$} & \scriptsize{1,2,5,10,17,26,37,50,\ldots} & \scriptsize{A002522}   &
\scriptsize{$\frac{1-2x+2x^2+x^3}{(1-x)^3}$} 
& \scriptsize{Theorem~\ref{thm-P1-P2-P3-P4-P5}}
\\[1mm]
\hline
	\end{tabular}
\end{center} 
}
\vspace{-0.5cm}
 	\caption{Stoimenow matchings avoiding multiple patterns.}\label{tab-results}
\end{table}

\section{Preliminaries}

In this section, we provide the essential technical details from~\cite{LvKitZhang} needed to derive the results of this paper; for further details, we refer the reader to~\cite{LvKitZhang}. We also note that in~\cite{LvKitZhang}, the results for the pattern $P_3$ are obtained as a special case of the more general results for the pattern $P^k_3$ (not defined here). Therefore, to establish our results on multi-avoidance, we need to present in Section~\ref{subsec-P3-avoid} an alternative derivation of the fact that $|\M_n(P_3)|=C_n$.

Given a matching $M \in \M_n$ with an arc $[a_n,2n]$ (where $a_n$ is the last opener), the {\it reduction arc} of $M$ is $\red(M)=[a_j,a_n+1]$, introduced in~\cite{Claesson-Dukes-Kitaev}. As shown in Figure~\ref{fig-exa-glue}, for the matching $M_1=\{[1,4],[2,5],[3,8],[6,9],[7,10]\}$, we have
$\red(M_1)=[3,8]$, while for $M_2=\{[1,2],[3,5], [4,6],[7,8]\}$, we have $\red(M_2)=[7,8]$.

A matching \( M \in \mathcal{M}_n \) is \textit{reducible} if there exists an integer \( i \), with \( 1 < i < 2n \), such that the number of openers equals the number of closers in the set \( \{1, 2, \ldots, i\} \). Otherwise, \( M \) is \textit{irreducible}. 
Referring to Figure~\ref{fig-exa-glue}, the matching $M_1$ is irreducibel, while the matching $M_2$ is reducible.
Each matching can be decomposed into {\it irreducible blocks}. 
For example, the matching $M_2$ as shown in Figure~\ref{fig-exa-glue} can be decomposed into the blocks $B_1=\{[1,2]\}$, $B_2=\{[3,5], [4,6]\}$, and $B_3=\{[7,8]\}$.

\subsection{\( P_2 \)-avoiding Stoimenow matchings}\label{subsec-P2-avoiding}

Given a pair $M_1\in\mathcal{M}_{k-1}(P_2)$ and $M_2\in\mathcal{M}_{n-k}(P_2)$ with $1\le k\le n$, the {\it gluing procedure} that combines $M_1$ and $M_2$ into a matching $M\in\mathcal{M}_n(P_2)$ is defined as follows, following~\cite{LvKitZhang}. We first obtain $\widetilde{M}$ from~$M_1$. 
If $M_1=\emptyset$, set $\widetilde{M}=\{[1,2]\}$; otherwise, let $M_1=\{[a_1,b_1],\dots,[a_{k-1},b_{k-1}]\}$ and form $\widetilde{M}$ by adding a reduction arc $[a,b]$, where the opener $a$ can be placed in one of two possible positions:
\begin{itemize}
\item  If \( M_1 \) is a \( (k-1) \)-crossing, we place \( a \) immediately to the left of the first opener in \( M_1 \).
\item If $M_1$ is not a $(k-1)$-crossing, let $a_i,a_{i+1},\dots,a_j$ be all openers less than $b_1$ whose corresponding closers are greater than $a_{k-1}$.
Then place $a$ immediately before $X$ if $X$ is nonempty, or before $b_1$ otherwise.
\end{itemize}
Finally, \( M \) is obtained by merging \(\MM\) and  \(M_2\) after increasing each element in \(M_2\) by $2k$. 

For $M\in\mathcal{M}_n(P_2)$, following~\cite{LvKitZhang}, the {\it splitting procedure} for splitting~$M$ into $M_1$ and $M_2$ is as follows.
Let $\MM$ be the first irreducible block of~$M$, and let $M_2$ be the matching formed by the remaining arcs after decreasing every element by~$|\MM|$.
The matching $M_1$ is obtained from $\MM$ by deleting its reduction arc.

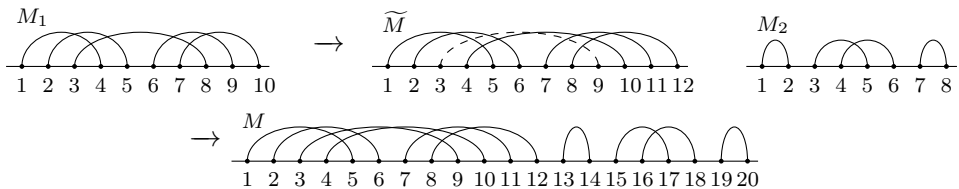
\begin{figure}[h]
    \centering
     \begin{tikzpicture}[scale = 0.35] 
        \draw (-0.6,0) -- (9.4,0);
        \foreach \x in {0,1,2,3,4,5,6,7,8,9}
        {
            \filldraw (\x,0) circle (2pt);
        }
        \draw[black] (0,0) arc (180:0:1.5 and 1.3);
        \draw[black] (1,0) arc (180:0:1.5 and 1.3);
        \draw[black] (2,0) arc (180:0:2.5 and 1.3);
        \draw[black] (5,0) arc (180:0:1.5 and 1.3);
        \draw[black] (6,0) arc (180:0:1.5 and 1.3);
        
        \node[anchor=north west] at (-0.6, 2.6) {\scriptsize $M_1$};
     \node[anchor=north west] at (-0.6, -0.05) {\scriptsize{1}};
         \node[anchor=north west] at (0.4, -0.05) {\scriptsize{2}};
         \node[anchor=north west] at (1.4, -0.05) {\scriptsize{3}};
          \node[anchor=north west] at (2.4, -0.05) {\scriptsize{4}};
         \node[anchor=north west] at (3.4, -0.05) {\scriptsize{5}};
         \node[anchor=north west] at (4.4, -0.05) {\scriptsize{6}};
         \node[anchor=north west] at (5.4, -0.05) {\scriptsize{7}};
         \node[anchor=north west] at (6.4, -0.05) {\scriptsize{8}};
         \node[anchor=north west] at (7.4, -0.05) {\scriptsize{9}};
          \node[anchor=north west] at (8.4, -0.05) {\scriptsize{10}};
    \end{tikzpicture}
\hspace{0.1cm}
\raisebox{4ex}{$\xrightarrow{\hspace*{0.2cm}}$}  
\hspace{0.1cm}
    \begin{tikzpicture}[scale = 0.35] 
        \draw (-0.6,0) -- (11.4,0);
        \foreach \x in {0,1,2,3,4,5,6,7,8,9,10,11}
        {
            \filldraw (\x,0) circle (2pt);
        }
        \draw[black] (0,0) arc (180:0:2 and 1.3);
        \draw[black] (1,0) arc (180:0:2 and 1.3);
        \draw[black,dashed] (2,0) arc (180:0:3 and 1.3);
        \draw[black] (3,0) arc (180:0:3 and 1.3);
        \draw[black] (6,0) arc (180:0:2 and 1.3);
        \draw[black] (7,0) arc (180:0:2 and 1.3);  
        
        \node[anchor=north west] at (-0.6,2.5) {\scriptsize $\widetilde{M}$};        
     \node[anchor=north west] at (-0.6, -0.05) {\scriptsize{1}};
         \node[anchor=north west] at (0.4, -0.05) {\scriptsize{2}};
         \node[anchor=north west] at (1.4, -0.05) {\scriptsize{3}};
          \node[anchor=north west] at (2.4, -0.05) {\scriptsize{4}};
         \node[anchor=north west] at (3.4, -0.05) {\scriptsize{5}};
         \node[anchor=north west] at (4.4, -0.05) {\scriptsize{6}};
         \node[anchor=north west] at (5.4, -0.05) {\scriptsize{7}};
         \node[anchor=north west] at (6.4, -0.05) {\scriptsize{8}};
         \node[anchor=north west] at (7.4, -0.05) {\scriptsize{9}};
          \node[anchor=north west] at (8.4, -0.05) {\scriptsize{10}};
         \node[anchor=north west] at (9.4, -0.05) {\scriptsize{11}};
         \node[anchor=north west] at (10.4, -0.05) {\scriptsize{12}};
    \end{tikzpicture}
    \hspace{0.3cm}
    \begin{tikzpicture}[scale = 0.35] 
        \draw (-0.6,0) -- (7.4,0);
        \foreach \x in {0,1,2,3,4,5,6,7}
        {
            \filldraw (\x,0) circle (2pt);
        }
        \draw[black] (0,0) arc (180:0:0.5 and 1);
        \draw[black] (2,0) arc (180:0:1 and 1);
        \draw[black] (3,0) arc (180:0:1 and 1);
        \draw[black] (6,0) arc (180:0:0.5 and 1);
        \node[anchor=north west] at (-0.6, 2.3) {\scriptsize $M_2$};
        \node[anchor=north west] at (-0.6, -0.05) {\scriptsize{1}};
         \node[anchor=north west] at (0.4, -0.05) {\scriptsize{2}};
         \node[anchor=north west] at (1.4, -0.05) {\scriptsize{3}};
          \node[anchor=north west] at (2.4, -0.05) {\scriptsize{4}};
         \node[anchor=north west] at (3.4, -0.05) {\scriptsize{5}};
         \node[anchor=north west] at (4.4, -0.05) {\scriptsize{6}}; 
          \node[anchor=north west] at (5.4, -0.05) {\scriptsize{7}};
         \node[anchor=north west] at (6.4, -0.05) {\scriptsize{8}};
    \end{tikzpicture}
    \\
    \raisebox{4ex}{$\xrightarrow{\hspace*{0.2cm}}$}  
    \vspace{0.4cm}
    \begin{tikzpicture}[scale = 0.35, shift={(0,1)}] 
        \draw (-0.6,0) -- (19.4,0);
        \foreach \x in {0,1,2,3,4,5,6,7,8,9,10,11,12,13,14,15,16,17,18,19}
        {
            \filldraw (\x,0) circle (2pt);
        }
        \draw[black] (0,0) arc (180:0:2 and 1.3);
        \draw[black] (1,0) arc (180:0:2 and 1.3);
        \draw[black] (2,0) arc (180:0:3 and 1.3);
        \draw[black] (3,0) arc (180:0:3 and 1.3);
        \draw[black] (6,0) arc (180:0:2 and 1.3);
        \draw[black] (7,0) arc (180:0:2 and 1.3);
        \draw[black] (12,0) arc (180:0:0.5 and 1.3);
        \draw[black] (14,0) arc (180:0:1 and 1.3);
        \draw[black] (15,0) arc (180:0:1 and 1.3);
        \draw[black] (18,0) arc (180:0:0.5 and 1.3);
        
         \node[anchor=north west] at (-0.6,2.2) {\scriptsize $M$};
         \node[anchor=north west] at (-0.6, -0.05) {\scriptsize{1}};
         \node[anchor=north west] at (0.4, -0.05) {\scriptsize{2}};
         \node[anchor=north west] at (1.4, -0.05) {\scriptsize{3}};
          \node[anchor=north west] at (2.4, -0.05) {\scriptsize{4}};
         \node[anchor=north west] at (3.4, -0.05) {\scriptsize{5}};
         \node[anchor=north west] at (4.4, -0.05) {\scriptsize{6}};
         \node[anchor=north west] at (5.4, -0.05) {\scriptsize{7}};
         \node[anchor=north west] at (6.4, -0.05) {\scriptsize{8}};
         \node[anchor=north west] at (7.4, -0.05) {\scriptsize{9}};
          \node[anchor=north west] at (8.2, -0.05) {\scriptsize{10}};
         \node[anchor=north west] at (9.2, -0.05) {\scriptsize{11}};
         \node[anchor=north west] at (10.2, -0.05) {\scriptsize{12}};
         \node[anchor=north west] at (11.2, -0.05) {\scriptsize{13}};
          \node[anchor=north west] at (12.2, -0.05) {\scriptsize{14}};
         \node[anchor=north west] at (13.2, -0.05) {\scriptsize{15}};
         \node[anchor=north west] at (14.2, -0.05) {\scriptsize{16}};
         \node[anchor=north west] at (15.2, -0.05) {\scriptsize{17}};
         \node[anchor=north west] at (16.2, -0.05) {\scriptsize{18}};
         \node[anchor=north west] at (17.2, -0.05) {\scriptsize{19}};
         \node[anchor=north west] at (18.2, -0.05) {\scriptsize{20}};
    \end{tikzpicture}
    \vspace{-0.5cm}
    \caption{An example of gluing $M_1\in\mathcal{M}_5(P_2)$ and $M_2\in \mathcal{M}_4(P_2)$.}
    \label{fig-exa-glue}
\end{figure}

Figure~\ref{fig-exa-glue} illustrates an example of the gluing procedure, and the reverse steps yield the splitting procedure for \( M \).

\subsection{\( P_3 \)-avoiding Stoimenow matchings}\label{subsec-P3-avoid}

In this subsection, we give an alternative proof of the following result established in \cite{LvKitZhang}.

\begin{thm}\label{MnP3Cn}
For $n\geq 0$, $|\M_n(P_3)|=C_n$.
\end{thm}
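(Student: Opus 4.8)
The plan is to argue via the decomposition of a matching into its irreducible blocks — the engine already used for $P_2$ — adapted to the fact that, unlike $P_2$, the pattern $P_3$ is \emph{reducible}. First I would record the elementary reformulation of the forbidden configuration: a matching contains $P_3$ precisely when it has a $2$-crossing $\{[a,c],[b,d]\}$ (so $a<b<c<d$) together with an arc lying entirely to its left (closer $<a$) and an arc lying entirely to its right (opener $>d$). The irreducible blocks of $P_3$ itself are $\{[1,2]\}$, the $2$-crossing $\{[1,3],[2,4]\}$, and $\{[1,2]\}$, so an occurrence of $P_3$ inside $M$ may straddle three consecutive irreducible blocks of $M$. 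The key structural claim to establish is: if $M\in\M_n(P_3)$ has irreducible blocks $B_1,\dots,B_m$, then every interior block $B_i$ (with $1<i<m$) contains no $2$-crossing and hence, being an irreducible Stoimenow matching, equals $\{[1,2]\}$; and, if $m\ge 2$, then $B_1$ has no $2$-crossing with an arc of $B_1$ to its left, and $B_m$ has no $2$-crossing with an arc of $B_m$ to its right. Conversely, any concatenation $B_1B_2\cdots B_m$ of irreducible Stoimenow matchings with these properties avoids $P_3$. Both directions are short case checks based on the reformulation, the point being that a $2$-crossing is itself irreducible and therefore always lives inside a single block. Call an irreducible Stoimenow matching \emph{left-clean} (resp.\ \emph{right-clean}) if it has no $2$-crossing with an arc of it to the left (resp.\ right); note $\{[1,2]\}$ is both.

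With this structure the enumeration becomes bookkeeping. Let $\mathcal{I}(x)$, $\ell(x)$, $r(x)$ be the generating functions, by number of arcs, of irreducible, left-clean irreducible, and right-clean irreducible $P_3$-avoiding Stoimenow matchings. Classifying $M$ by its number $m$ of blocks (the cases $m=0$, $m=1$, and $m\ge 2$, where in the last a possibly empty run of $\{[1,2]\}$'s sits between $B_1$ and $B_m$) gives
\[
A(x)=1+\mathcal{I}(x)+\frac{\ell(x)\,r(x)}{1-x}.
\]
Reversing the linear order is an involution on Stoimenow matchings that fixes $P_3$, preserves irreducibility, and interchanges left-clean with right-clean, so $r(x)=\ell(x)$ and
\[
A(x)=1+\mathcal{I}(x)+\frac{\ell(x)^2}{1-x}.
\]
It then remains to determine $\mathcal{I}(x)$ and $\ell(x)$ independently.

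For that I would analyse the internal shape of an irreducible $P_3$-avoiding block $B$ with $n\ge 2$ arcs. Irreducibility forces the first two points of $B$ to be openers; if $B$ has $L$ leading openers (positions $1,\dots,L$ are openers, position $L+1$ is a closer) then these form an $L$-crossing, and the Type~1 condition forces position $1$ to be matched to the first closer, i.e.\ the first arc of $B$ is exactly $[1,L+1]$. Deleting this arc — and, dually, peeling the reduction arc off the last block as in the splitting procedure recalled above — produces smaller $P_3$-avoiding Stoimenow matchings; tracking which of these arise, and how the left-clean / right-clean distinction (which is exactly what decides whether such a deletion or insertion creates a straddling $P_3$) propagates, yields the remaining functional equations relating $\mathcal{I}(x)$, $\ell(x)$ and $A(x)$. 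Feeding these back into the identity above collapses the system to
\[
A(x)=1+xA(x)^2 ,
\]
so that $A(x)=\bigl(1-\sqrt{1-4x}\bigr)/(2x)$ and $|\M_n(P_3)|=C_n$ for all $n\ge 0$. The main obstacle is precisely this last step: the structure theorem and the generating-function bookkeeping are routine once the straddling phenomenon is isolated, but extracting the recursions for irreducible (and left-clean irreducible) $P_3$-avoiding matchings needs a careful, hands-on description of how a $2$-crossing can be anchored at one end of a block and how the leading and trailing crossings of a block interact with its remaining arcs.
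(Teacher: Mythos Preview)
Your block decomposition is correct and is a genuinely different route from the paper's proof: the paper never touches irreducible blocks, but instead classifies $M\in\M_n(P_3)$ by the relative positions of the first closer $b_1$ and the last opener $a_n$, and derives a self-contained functional equation for an auxiliary ``interval under an arc'' generating function $H(x)$ (Lemma~\ref{lem-H(x)}), which it solves directly to get $C(x)$. Your structure theorem is right (an interior block with a $2$-crossing would immediately furnish $P_3$, and an irreducible Stoimenow matching with no $2$-crossing is forced to be $\{[1,2]\}$), and the identity $A(x)=1+\mathcal{I}(x)+\ell(x)^2/(1-x)$ follows. Note also that ``left-clean'' is exactly avoidance of the length-$3$ pattern $P_4^3$, so $\ell(x)$ can in fact be read off from Theorem~\ref{thm-conj-case-k=3}: one gets $\ell(x)=x(1-x)/(1-2x)$.

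The gap is the determination of $\mathcal{I}(x)$. You acknowledge this yourself (``the main obstacle is precisely this last step''), but the sketch you give does not close. Two concrete problems: first, the splitting/gluing procedure you invoke from Section~\ref{subsec-P2-avoiding} is \emph{specific to $P_2$-avoidance}; for arbitrary Stoimenow matchings there is no canonical single reduction arc whose insertion recovers all irreducible matchings of the next size, so ``peeling the reduction arc'' does not give a clean bijection on $\M(P_3)$. Second, even granting $\ell(x)=x(1-x)/(1-2x)$, the target value $\mathcal{I}(x)=C(x)-1-x^2(1-x)/(1-2x)^2$ has coefficients $1,1,2,6,22,\ldots$, and there is no evident first-arc or reduction-arc recursion on irreducible $P_3$-avoiders that produces this sequence: deleting $[1,b_1]$ from an irreducible $P_3$-avoider can yield a \emph{reducible} matching, so the image does not stay inside the class you are recursing on, and tracking the resulting block structure together with left/right-cleanness reintroduces exactly the straddling bookkeeping you were trying to bypass. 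Nothing you have written indicates how those recursions would ``collapse'' to $A(x)=1+xA(x)^2$; that claim is asserted, not derived. As it stands the proposal is a sound reduction to a problem you have not solved.
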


\begin{proof}
We shall show that the g.f.\ $F(x)=\sum_{n\geq 0}|\M_n(P_3)|x^n$ coincides with 
$$C(x)=\sum_{n\geq 0}C_nx^n=\frac{1-\sqrt{1-4x}}{2x},$$
 to prove the theorem. Let $M=\{[1,a_1],\ldots,[a_n,2n]\}\in \M_n(P_3)$, we consider three cases that result in formula (\ref{F(x)-P3-formula}) below for $F(x)$. \\[-3mm]

\noindent
{\bf Case 1.}  The arcs $[1,b_1]$ and $[a_n,2n]$ overlap, so that $M$ is an $n$-crossing.
The contribution to $F(x)$ in this case is $\frac{1}{1-x}$, which also counts the empty matching.  \\[-3mm]

\noindent
{\bf Case 2.} The arcs $[1,b_1]$ and $[a_n,2n]$ do not overlap, and all openers in $M$ are between $1$ and $b_1$, and all closers in $M$ are between  $a_n$ and $2n$. Pattern $P_3$ cannot appear in this case, but to avoid Type 1 and Type 2 arcs, $M$ must be of the form presented schematically in Figure~\ref{case2-P3}. In that figure, the possibly empty run of openers $Y$ (resp., closers $Z$) has closers (resp., openers) between $b_1$ and $a_n$, while the possibly empty run of openers $(b_1-i)(b_1-i+1)\cdots (b_1-1)$ has closers $(a_n+1)(a_n+2)\cdots (a_n+i)$. Moreover, closers of arcs with openers in $Y$ and openers of arcs with closers in $Z$ can be shuffled arbitrarily without creating a forbidden configuration. The contribution to $F(x)$ in this case is $$\frac{x^2}{(1-x)(1-2x)},$$
where $\frac{1}{1-x}$ represents possibilities for openers not in $Y$, $x^2$ represents the arcs $[1,b_1]$ and $[a_n,2n]$, and $\frac{1}{1-2x}$ represents the choices between $b_1$ and $a_n$ (which are are given by binary strings of any length). \\[-3mm]

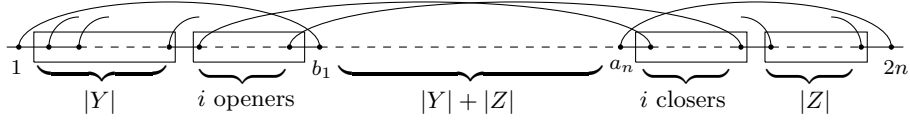
\begin{figure}
 \centering
    \begin{tikzpicture}[scale = 0.4] 
        \draw (-0.4,0) -- (2,0);
        \draw [dashed](2,0) -- (5,0);
        \draw (5,0) -- (6,0);
        \draw [dashed](6,0) -- (9,0);
        \draw (9,0) -- (10,0);
        \draw [dashed](10,0) -- (20,0);
        \draw (20,0) -- (21,0);
        \draw[dashed] (21,0) -- (24,0);
        \draw (24,0) -- (25,0);
        \draw[dashed] (25,0) -- (28,0);
        \draw (28,0) -- (29.4,0);
        \foreach \x in {0,1,2,5,6,9,10,20,21,24,25,28,29}
        {
            \filldraw (\x,0) circle (2pt);
        }
        \draw(0.5,-0.5)--(0.5,0.5)--(5.2,0.5)--(5.2,-0.5)--(0.5,-0.5);
        \draw(5.8,-0.5)--(5.8,0.5)--(9.5,0.5)--(9.5,-0.5)--(5.8,-0.5);
        \draw(20.5,-0.5)--(20.5,0.5)--(24.2,0.5)--(24.2,-0.5)--(20.5,-0.5);
        \draw(24.8,-0.5)--(24.8,0.5)--(28.2,0.5)--(28.2,-0.5)--(24.8,-0.5);

    \draw[domain=0:90, smooth, variable=\t, shift={(2,0)}] 
    plot ({-cos(\t)}, {sin(\t)});
    \draw[domain=0:90, smooth, variable=\t, shift={(3,0)}] 
    plot ({-cos(\t)}, {sin(\t)});
    \draw[domain=0:90, smooth, variable=\t, shift={(6,0)}] 
    plot ({-cos(\t)}, {sin(\t)});

\draw[domain=0:90, smooth, variable=\t, shift={(24,0)}] 
    plot ({cos(\t)}, {sin(\t)});
\draw[domain=0:90, smooth, variable=\t, shift={(27,0)}] 
    plot ({cos(\t)}, {sin(\t)});
        
        \draw[black] (0,0) arc (180:0:5 and 1.5);
        \draw[black] (20,0) arc (180:0:4.5 and 1.5);
        \draw[black] (6,0) arc (180:0:7.5 and 1.5);
        \draw[black] (9,0) arc (180:0:7.5 and 1.5);

        \node[anchor=north west] at (-0.6, -0.1) {\scriptsize{1}};
        \node[anchor=north west] at (9.4, -0.1) {\scriptsize{$b_1$}};
        \node[anchor=north west] at (19.3, -0.1) {\scriptsize{$a_n$}};
        \node[anchor=north west] at (28.4, -0.1) {\scriptsize{$2n$}};

     \node[anchor=north west] at (0.3, -0.2) {$\underbrace{ \hspace{1.7cm}}$};
     \node[anchor=north west] at (1.8, -1.1) {\footnotesize{$|Y|$}};
     \node[anchor=north west, rotate=90] at (5.65, -1.7) {$\Bigg \{$};
     \node[anchor=north west] at (5.6, -1.1) {\footnotesize{$i$ openers}};
     \node[anchor=north west] at (10.3, -0.2) {$\underbrace{ \hspace{3.5cm}}$};
     \node[anchor=north west] at (13, -1.1) {\footnotesize{$|Y|+|Z|$}};
      \node[anchor=north west, rotate=90] at (20.4, -1.7) {$\Bigg \{$};
     \node[anchor=north west] at (20.3, -1.1) {\footnotesize{$i$ closers}};
     \node[anchor=north west, rotate=90] at (24.5, -1.7) {$\Bigg \{$};
     \node[anchor=north west] at (25.6, -1.1) {\footnotesize{$|Z|$}};
    \end{tikzpicture}
\caption{The structure of $P_3$-avoiding Stoimenow matchings in Case 2.}\label{case2-P3}
\end{figure} 

\noindent
{\bf Case 3.} The arcs \([1, b_1]\) and \([a_n, 2n]\) do not overlap, and there exists an arc \([a_i, b_i]\) such that \(b_1 < a_i < b_i < a_n\). To avoid \(P_3\) as well as Type 1 and Type 2 arcs, \(M\) must have the structure shown schematically in Figure~\ref{case3-P3}, where \(A\), \(B\), and \(E\) must be present to distinguish this case from the previous ones. Details of this structure are as follows, where we let $\alpha$ (resp., $\beta$) be the label of a closer (resp., opener) with the opener (resp., closer) in $[1,b_1]$ (resp., $[a_n,2n]$):\\[-3mm]

\begin{figure}[h]
 \centering
    \begin{tikzpicture}[scale = 0.4] 
        \draw [dashed](-0.4,0) -- (4,0);
        \draw (4,0) -- (5,0);
        \draw [dashed](5,0) -- (22,0);
        \draw (22,0) -- (23,0);
        \draw [dashed](23,0) -- (27.4,0);
        \foreach \x in {0,2,4,5,7,9,11,13,18,20,22,23,25,27}
        {
            \filldraw (\x,0) circle (2pt);
        }
        \draw(1.8,-0.5)--(1.8,0.5)--(4.8,0.5)--(4.8,-0.5)--(1.8,-0.5);
       \draw(22.2,-0.5)--(22.2,0.5)--(25.2,0.5)--(25.2,-0.5)--(22.2,-0.5);
        
        \draw[black] (0,0) arc (180:0:2.5 and 1.5);
        \draw[black] (2,0) arc (180:0:10.5 and 1.5);
        \draw[black] (4,0) arc (180:0:10.5 and 1.5);
        \draw[black] (7,0) arc (180:0:1 and 1);
        \draw[black] (11,0) arc (180:0:1 and 1);
        \draw[black] (18,0) arc (180:0:1 and 1);
        \draw[black] (22,0) arc (180:0:2.5 and 1.5);

        \node[anchor=north west] at (-0.6, -0.05) {\scriptsize{1}};
        \node[anchor=north west] at (4.4, -0.05) {\scriptsize{$b_1$}};
        \node[anchor=north west] at (21.3, -0.1) {\scriptsize{$a_n$}};
        \node[anchor=north west] at (26.4, -0.05) {\scriptsize{$2n$}};
        
     \node[anchor=north west] at (1.6, 1.7) {\footnotesize{$A$}};
     \node[anchor=north west] at (5.3, 1.1) {\footnotesize{$B$}};
    \node[anchor=north west] at (7.3, 1.2) {\footnotesize{$C$}};
    \node[anchor=north west] at (9.1, 1.2) {\footnotesize{$Y_1$}};
    \node[anchor=north west] at (11.1, 1.2) {\footnotesize{$X_1$}};
    \node[anchor=north west] at (13, 1.2) {\footnotesize{$Y_2$}};
            \node[anchor=north west] at (14.7, 1.2) {\footnotesize{$\cdots$}};
     \node[anchor=north west] at (16.6, 1.2) {\footnotesize{$Y_k$}};
     \node[anchor=north west] at (18.2, 1.2) {\footnotesize{$X_k$}};
     \node[anchor=north west] at (20.4, 1.1) {\footnotesize{$D$}};
     \node[anchor=north west] at (23.7, 1.7) {\footnotesize{$E$}};
    \end{tikzpicture} 
\caption{The structure of $P_3$-avoiding Stoimenow matchings in Case 3.}\label{case3-P3}
\end{figure}
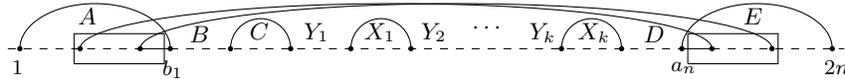 

\begin{itemize}
\item  The possibly empty run of openers $(b_1-i)(b_1-i+1)\cdots (b_1-1)$ has closers $(a_n+1)(a_n+2)\cdots (a_n+i)$ which contributes a factor of $\frac{1}{1-x}$ to the formula derived below for this case; the run of openers $(2) (3)\cdots(b_1-i-1)$ have closers between $b_1$ and $a_n$.
\item The letter under an arc \([a, b]\) represents the interval of all openers and closers between \(a\) and \(b\) (these letters are \(A\), \(C\), \(X_1, \ldots, X_k\), and \(E\)), while a letter between two consecutively drawn arcs \([a, b]\) and \([c, d]\) represents the interval of all openers and closers between \(b\) and \(c\) (these letters are \(B\), \(Y_1, \ldots, Y_k\), and \(D\)). Except for \(A\) and \(E\), the interval represented by a letter inside an arc may contain other arcs. 
\item To avoid an occurrence of $P_3$ involving arcs $[1,b_1]$ and $[a_n,2n]$, an interval $I$, different from $A$ and $E$,  under an arc has closers (resp., openers) of arcs with openers (resp., closers) in $I$ or $A$ (resp., $I$ or $E$). We let $H(x)$ be the g.f.\ for such an interval $I$ together with the arc that defines it. $H(x)$ is derived in Lemma~\ref{lem-H(x)}, and it is used below to derive $F(x)$.
\item For any interval \(Y_j\), it is either empty, or it begins with a \(\beta\), ends with an \(\alpha\), and has an arbitrary (possibly empty) binary string over \(\{\alpha, \beta\}^*\) in the positions in between. Hence, the g.f.\ for each $Y_j$ is $1+\frac{x^2}{1-2x}$.  Since each $Y_j$ comes in pair with $X_j$, the g.f.\ for the interval $Y_jx_jX_jy_j$, where $[x_j,y_j]$ is the arc defining $X_j$, is $\frac{1}{1-\left(1+\frac{x^2}{1-2x}\right)H(x)}$ (there may be no $Y_j$'s and $X_j$'s).
\item Finally, for interval \(B\) (resp., \(D\)), the only requirement is that it ends with \(\alpha\) (resp., begins with \(\beta\)), if it is not empty. The g.f.\ for these intervals together with the arcs $[1,b_1]$, $[a_n, 2n]$, $C$ and its arc is therefore \(x^2\left(1 + \frac{x}{1 - 2x}\right)^2H(x)\).
\end{itemize}

Summarizing the three cases above, we obtain
\begin{equation}\label{F(x)-P3-formula}
\footnotesize
F(x)=\frac{1}{1-x}\left(1+\frac{x^2}{1-2x}+x^2\left(1+\frac{x}{1-2x}\right)^2\frac{H(x)}{1-\left(1+\frac{x^2}{1-2x}\right)H(x)}\right).
\end{equation}\\[-3mm]

\begin{figure}
 \centering
    \begin{tikzpicture}[scale = 0.4] 
        \draw [dashed](-0.4,0) -- (22.4,0);
        \foreach \x in {0,3,6,8,11,22}
        {
            \filldraw (\x,0) circle (2pt);
        }
        
        \filldraw (18.7,0) circle (2pt);
         \filldraw (15.7,0) circle (2pt);
       
        \draw[black] (0,0) arc (180:0:11 and 2.5);
        \draw[black] (3,0) arc (180:0:1.5 and 1);
        \draw[black] (8,0) arc (180:0:1.5 and 1);
        \draw[black] (15.7,0) arc (180:0:1.5 and 1);
        
     \node[anchor=north west] at (0.3, 1.1) {\footnotesize{$K_0$}};
     \node[anchor=north west] at (2, 1) {\footnotesize{$\alpha$}};
    \node[anchor=north west] at (3.9, 1.1) {\footnotesize{$I_1$}};
    \node[anchor=north west] at (6, 1.1) {\footnotesize{$K_1$}};
    \node[anchor=north west] at (8.8, 1.1) {\footnotesize{$I_2$}};
    \node[anchor=north west] at (10.9, 1.1) {\footnotesize{$K_2$}};
        \node[anchor=north west] at (12.1, 1.2) {\footnotesize{$\cdots$}};
     \node[anchor=north west] at (13.2, 1.1) {\footnotesize{$K_{k-1}$}};
     \node[anchor=north west] at (16.4, 1.1) {\footnotesize{$I_k$}};
     \node[anchor=north west] at (18.5, 1.2) {\footnotesize{$\beta$}};
     \node[anchor=north west] at (19.5, 1.1) {\footnotesize{$K_{k}$}};
      \node[anchor=north west] at (-0.6, -0.05) {\footnotesize{$x_j$}};
     \node[anchor=north west] at (21.3, -0.05) {\footnotesize{$y_j$}};
    \end{tikzpicture} 
\caption{The structure of intervals under arcs in Lemma~\ref{lem-H(x)}.}\label{structureH(x)-P3}
\end{figure}
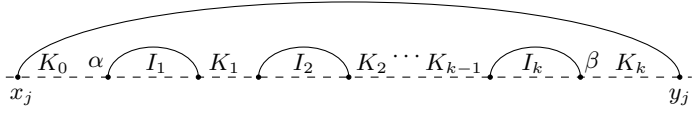

\begin{lem}\label{lem-H(x)} We have 
\begin{equation}\label{H(x)-formula}
H(x)=\frac{1-\sqrt{1-4x}}{2(1-x)}=\frac{xC(x)}{1-x}.
\end{equation}
\end{lem}

\begin{proof} As already observed above, any interval \( I \) under an arc must be formed by \( \alpha \)'s and \( \beta \)'s shuffled with arcs whose opener and closer are both contained in \( I \). If there are no such arcs in \( I \), the g.f.\ for this case is \( \frac{x}{1 - 2x} \), where the \( x \) in the numerator corresponds to the arc defining~\( I \). Figure~\ref{structureH(x)-P3} presents the situation when there are $k$, $k\geq 1$, arcs in $I$.  In this case, 
\begin{itemize}
\item \( I_1 \) must be preceded by an \( \alpha \), or else Type 1 arcs occur.  
\( I_k \) must be followed by a \( \beta \), or else Type 2 arcs occur.  
The g.f.\ for \( I_j \), together with the arc defining it, is \( H(x) \) for \( 1 \leq j \leq k \).
\item Observe that $\alpha$ (resp., $\beta$) in Figure~\ref{structureH(x)-P3} does not belong to $K_0$ (resp., $K_k$).  $K_i$'s can be any (possibly empty) binary strings over $\{\alpha,\beta\}^*$ for $0\leq i\leq k$, with additional restriction for nonempty $K_j$ to begin with a $\beta$ and to end with an $\alpha$ for $1\leq j\leq k-1$. Hence, the g.f.\ for $K_0$ and $K_k$ is $\frac{1}{1-2x}$ and for all other $K_i$'s it is $1+\frac{x^2}{1-2x}$. 
\item We can think of $I_i$ and $K_{i-1}$ be coming in pair for $2\leq i\leq k$.  The g.f.\ for a pair is $\left(1+\frac{x^2}{1-2x}\right)H(x)$. 
\end{itemize}
Summarizing all our considerations above, we conclude that  
\begin{equation}\label{equ-H3}
H(x) = \frac{x}{1 - 2x} + \frac{x^3}{1 - 2x} \cdot \frac{H(x)}{1 - 2x} \cdot \frac{1}{1 - \left(1 + \frac{x^2}{1 - 2x} \right) H(x)}
\end{equation}
where \( \frac{x^3}{1 - 2x} \) corresponds to the \( \alpha \), \( \beta \), and \( K_k \) in Figure~\ref{structureH(x)-P3} together with the arc defining \( I \),  and \( \frac{H(x)}{1 - 2x} \) corresponds to \( K_0 \) and \( I_1 \).  
Solving (\ref{equ-H3}) for $H(x)$, we obtain (\ref{H(x)-formula}), which comples the proof of Lemma~\ref{lem-H(x)}.\end{proof}

Substituting (\ref{H(x)-formula}) into (\ref{F(x)-P3-formula}), we obtain $F(x)=C(x)$ completing our proof of Theorem~\ref{MnP3Cn}.
\end{proof}

\subsection{Three length-3 Wilf-equivalent patterns}\label{sub-length-3-avoiding}
Two patterns \( P \) and \( Q \) are {\it Wilf-equivalent} if \( |\mathcal{M}_n(P)| = |\mathcal{M}_n(Q)| \) for all \( n \geq 0 \). In this subsection, we study the enumeration of Stoimenow matchings avoiding the following patterns, whose Wilf-equivalence was established bijectively in~\cite{LvKitZhang}.\begin{center}
\begin{tikzpicture}[scale = 0.3]
    \draw (-0.4,0) -- (5.4,0);
    \foreach \x in {0,1,2,3,4,5} \filldraw (\x,0) circle (2pt);
    \draw[black] (0,0) arc (180:0:1 and 1);
    \draw[black] (1,0) arc (180:0:1.5 and 1);
    \draw[black] (3,0) arc (180:0:1 and 1);
    \node[anchor=north west] at (1.5, -0.05) {\scriptsize{$P^3_3$}};
\end{tikzpicture}
\hspace{0.5cm}
\begin{tikzpicture}[scale = 0.3]
    \draw (-0.4,0) -- (5.4,0);
    \foreach \x in {0,1,2,3,4,5} \filldraw (\x,0) circle (2pt);
    \draw[black] (0,0) arc (180:0:0.5 and 1);
    \draw[black] (2,0) arc (180:0:1 and 1);
    \draw[black] (3,0) arc (180:0:1 and 1);
    \node[anchor=north west] at (1.5, -0.05) {\scriptsize{$P^3_4$}};
\end{tikzpicture}
\hspace{0.5cm}
\begin{tikzpicture}[scale = 0.3]
    \draw (-0.4,0) -- (5.4,0);
    \foreach \x in {0,1,2,3,4,5} \filldraw (\x,0) circle (2pt);
    \draw[black] (0,0) arc (180:0:1 and 1);
    \draw[black] (1,0) arc (180:0:1 and 1);
    \draw[black] (4,0) arc (180:0:0.5 and 1);
    \node[anchor=north west] at (1.5, -0.05) {\scriptsize{$P^3_5$}};
\end{tikzpicture}
\end{center}
\vspace{-0.5cm}
The results of the next theorem will be applied in Section~\ref{pairs-sec}.

\begin{thm}\label{thm-conj-case-k=3} 
 We have $a_n=|\M_n(P^3_3)|=
    |\M_n(P^3_4)|=
    |\M_n(P^3_5)|= 2^{n-1}$ for $n\geq 1$ with $a_0=1$, and the g.f.\ is given by
    \begin{align}\label{gf-length-3}
    \sum_{n\geq 0} a_n\, x^n = \frac{1-x}{1-2x}.
    \end{align}
 \end{thm}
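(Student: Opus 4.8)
The plan is to exploit the bijective Wilf-equivalence of $P^3_3$, $P^3_4$, $P^3_5$ established in \cite{LvKitZhang}: it is enough to enumerate one of them, and I would work with $P^3_5$. So the goal becomes to show that $A(x)=\sum_{n\ge0}|\M_n(P^3_5)|\,x^n$ equals $\frac{1-x}{1-2x}$; extracting coefficients then yields $|\M_n(P^3_5)|=2^{n-1}$ for $n\ge1$ and $|\M_0(P^3_5)|=1$, which is \eqref{gf-length-3}.

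Everything hinges on one elementary remark: in a Stoimenow matching, if two arcs have openers at two consecutive positions $i$, $i+1$, then the prohibition of Type~1 arcs forces these two arcs to cross, and dually two arcs whose closers are at consecutive positions must cross by the prohibition of Type~2 arcs. In particular, in an irreducible Stoimenow matching with at least two arcs the position $2$ has to be an opener (otherwise $\{[1,2]\}$ would be a proper initial block), so the arcs through positions $1$ and $2$ form a $2$-crossing; hence every irreducible Stoimenow matching with $\ge2$ arcs contains a $2$-crossing. Decomposing $M\in\M(P^3_5)$ into its irreducible blocks $B_1,\dots,B_m$, no $B_i$ with $i<m$ can have more than one arc, since it would contain a $2$-crossing and any arc of $B_{i+1}$ — lying entirely to its right — would then complete an occurrence of $P^3_5$. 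Hence $M$ is uniquely a concatenation of $j\ge0$ copies of the single arc $[1,2]$, possibly followed by an irreducible $P^3_5$-avoiding matching with $\ge2$ arcs; conversely every such object avoids $P^3_5$, because a $2$-crossing must lie inside one block and a third arc strictly to its right would have to lie in a later block. Writing $I(x)=\sum_{k\ge2}i_kx^k$ for the generating function of irreducible $P^3_5$-avoiding Stoimenow matchings, this decomposition gives
\[
A(x)=\frac{1}{1-x}\bigl(1+I(x)\bigr).
\]

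The main work, then, is to prove that $I(x)=\frac{x^2}{1-2x}$, i.e.\ $i_k=2^{k-2}$ for $k\ge2$. I would do this by showing that an irreducible $P^3_5$-avoiding Stoimenow matching with $k$ arcs is completely determined by, and in bijection with, the composition $(r_1,\dots,r_t)$ of $k$ recording the sizes of its maximal runs of consecutive openers, subject only to $r_1\ge2$ (forced by irreducibility, as above). The point is that $P^3_5$-avoidance is extremely rigid here: between two consecutive runs of openers there must be exactly one closer — a second such closer, together with the two arcs closing at it and its predecessor, would form a $2$-crossing all of whose candidate "right arcs" are the openers of the following runs — so the opener/closer word is forced to be $\mathrm O^{r_1}\mathrm C\,\mathrm O^{r_2}\mathrm C\cdots\mathrm C\,\mathrm O^{r_t}\,\mathrm C^{\,k-t+1}$; and then the Stoimenow conditions (closers increasing along each opener run, openers increasing along the closer run), combined once more with $P^3_5$-avoidance, force the matching itself: position $1$ and the first opener of each intermediate run close at the $t-1$ interior closers, while all remaining openers close, in increasing order, at the final block of $k-t+1$ closers. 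A direct check confirms that this matching is Stoimenow, irreducible, and $P^3_5$-avoiding. Consequently $i_k$ equals the number of compositions of $k$ with first part $\ge2$, which is $2^{k-1}-2^{k-2}=2^{k-2}$. (Equivalently, one can package this structural analysis as a self-referential generating-function identity for $I(x)$ in the style of \eqref{equ-H3} and solve it.) The hard part I anticipate is precisely the rigidity claim: ruling out every alternative opener/closer word and every alternative arc assignment, which I expect to establish by induction on the number $t$ of opener runs, tracking how each extra run would otherwise create a forbidden configuration.

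Finally, substituting $I(x)=\frac{x^2}{1-2x}$ into the displayed formula gives
\[
A(x)=\frac{1}{1-x}\left(1+\frac{x^2}{1-2x}\right)=\frac{1}{1-x}\cdot\frac{(1-x)^2}{1-2x}=\frac{1-x}{1-2x},
\]
which is \eqref{gf-length-3}, and reading off coefficients completes the proof.
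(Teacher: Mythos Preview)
Your proposal is correct but follows a genuinely different path from the paper. The paper handles $P^3_3$ by the one-line observation that an irreducible $P^3_3$-avoiding Stoimenow matching must be a crossing, whence the first-block recursion $a_n=1+\sum_{k=1}^{n-1}a_{n-k}$ gives $a_n=2^{n-1}$; it then treats $P^3_4$ by a direct bijection with binary strings of length $n-1$, recording for each arc after $[1,b_1]$ whether it lies inside an arc of the trailing noncrossing or contributes a new noncrossing arc. You instead attack $P^3_5$, first reducing to an irreducible tail and then classifying those tails by the composition of opener-run lengths. Your rigidity claim is right, and the induction you outline goes through: the arc closing at the $i$-th interior closer must open at the first position of $R_i$ (any earlier run would produce a $2$-crossing with that run's own first arc and hence a $P^{3}_{5}$ with an opener in $R_{i+1}$; any later position in $R_i$ is impossible by Type~1), after which the remaining openers are forced onto the final closers by Type~2, and one checks directly that the resulting matching is Stoimenow, irreducible, and $P^3_5$-free. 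The trade-off is that the paper's $P^3_3$ argument is dramatically shorter, while your route yields an explicit canonical form for every $P^3_5$-avoiding matching (a run of isolated arcs followed by a ``staircase'' indexed by a composition), which is structurally informative and could be reused elsewhere. One small wording point: your phrase ``position~$1$ and the first opener of each intermediate run'' is correct only if ``intermediate'' excludes both the first and the last run; you may want to say simply ``the first opener of each run except the last''.
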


\begin{proof} Since \(|\mathcal{M}_n(P^3_4)| = |\mathcal{M}_n(P^3_5)|\), we only need to consider $P^3_2$ and $P^3_4$. \\[-3mm]

\noindent
{\bf Pattern $P^3_2$.} Note that if \( M \in \mathcal{M}_n(P^3_2) \) is irreducible, then \( M \) must be an \( n \)-crossing, so there is only one such matching. On the other hand, if \( M \) is reducible, the first (leftmost) irreducible block must be a \( k \)-crossing for some \( k \) (only one choice). The remaining part of the matching can be any \( P^3_2 \)-avoiding matching. Hence, if we let \( a_n = |\mathcal{M}_n(P^3_2)| \), we have $a_n = 1 + \sum_{k=1}^{n-1} a_{n-k}$, which leads to \( a_n = 2^{n-1} \), since \( a_0 = 1 \). \\[-3mm]

\noindent
{\bf Pattern $P^3_4$.} For $M\in\M_n(P^3_4)$, the matching induced by the arcs in $X=\{[a,b]|a>b_1\}$, along with the arc $[1,b_1]$, must be a $k$-noncrossing for some $k$. For an arc with an opener under $[1,b_1]$, we label its closer by $\alpha$ (all these arcs induce an $(n-k)$-crossing), and we label each arc in $X$ by $\beta$. 

Next, we describe a bijection between matchings in \( \mathcal{M}_n(P^3_4) \) and binary strings of length \( n - 1 \) over the alphabet \( \{ \alpha, \beta \} \), which will complete the proof of the theorem.  
Given such a string \( x_1x_2\cdots x_{n-1} \), the \( \beta \)'s in it (if any), from left to right, correspond to the arcs in \( X \) in increasing order of their openers.

If \( x_i = \alpha \), then place a closer inside the arc marked by the \( \beta \) closest to \( x_i \) on the left. There will be a unique position for the closer, as it must be part of the \((n-k)\)-crossing.  
On the other hand, if no such \( \beta \) exists, place the closer in the unique (smallest) position immediately to the right of the arc \([1, b_1]\). For example, the string $\beta\beta\alpha\beta\alpha\alpha\beta$ corresponds to the matching shown in the left figure below, while the string $\alpha\alpha\beta\alpha\beta$ corresponds to the matching shown in the right figure below.
\begin{center}
    \begin{tikzpicture}[scale = 0.4] 
        \draw (-0.4,0) -- (15.4,0);
        \foreach \x in {0,1,2,3,4,5,6,7,8,9,10,11,12,13,14,15}
        {
            \filldraw (\x,0) circle (2pt);
        }
        
        \draw[black] (0,0) arc (180:0:2 and 1);
        \draw[black] (1,0) arc (180:0:3.5 and 1.5);
        \draw[black] (2,0) arc (180:0:4.5 and 1.5);
        \draw[black] (3,0) arc (180:0:4.5 and 1.5);
        \draw[black] (5,0) arc (180:0:0.5 and 1);
        \draw[black] (7,0) arc (180:0:1 and 1);
        \draw[black] (10,0) arc (180:0:1.5 and 1);
        \draw[black] (14,0) arc (180:0:0.5 and 1);

         \node[anchor=north west] at (-0.6, -0.05) {\scriptsize{1}};
         \node[anchor=north west] at (0.4, -0.05) {\scriptsize{2}};
         \node[anchor=north west] at (1.4, -0.05) {\scriptsize{3}};
          \node[anchor=north west] at (2.4, -0.05) {\scriptsize{4}};
         \node[anchor=north west] at (3.4, -0.05) {\scriptsize{5}};
         \node[anchor=north west] at (4.4, -0.05) {\scriptsize{6}};
         \node[anchor=north west] at (5.4, -0.05) {\scriptsize{7}};
         \node[anchor=north west] at (6.4, -0.05) {\scriptsize{8}};
         \node[anchor=north west] at (7.4, -0.05) {\scriptsize{9}};
          \node[anchor=north west] at (8.4, -0.05) {\scriptsize{10}};
         \node[anchor=north west] at (9.4, -0.05) {\scriptsize{11}};
         \node[anchor=north west] at (10.4, -0.05) {\scriptsize{12}};
         \node[anchor=north west] at (11.4, -0.05) {\scriptsize{13}};
          \node[anchor=north west] at (12.4, -0.05) {\scriptsize{14}};
         \node[anchor=north west] at (13.4, -0.05) {\scriptsize{15}};
         \node[anchor=north west] at (14.4, -0.05) {\scriptsize{16}};
    \end{tikzpicture}
    \hspace{0.3cm}
    \begin{tikzpicture}[scale = 0.4] 
        \draw (-0.4,0) -- (11.4,0);
        \foreach \x in {0,1,2,3,4,5,6,7,8,9,10,11}
        {
            \filldraw (\x,0) circle (2pt);
        }
        
        \draw[black] (0,0) arc (180:0:2 and 1);
        \draw[black] (1,0) arc (180:0:2 and 1);
        \draw[black] (2,0) arc (180:0:2 and 1);
        \draw[black] (3,0) arc (180:0:2.5 and 1);
        \draw[black] (7,0) arc (180:0:1 and 1);
        \draw[black] (10,0) arc (180:0:0.5 and 1);

        \node[anchor=north west] at (-0.6, -0.05) {\scriptsize{1}};
         \node[anchor=north west] at (0.4, -0.05) {\scriptsize{2}};
         \node[anchor=north west] at (1.4, -0.05) {\scriptsize{3}};
          \node[anchor=north west] at (2.4, -0.05) {\scriptsize{4}};
         \node[anchor=north west] at (3.4, -0.05) {\scriptsize{5}};
         \node[anchor=north west] at (4.4, -0.05) {\scriptsize{6}};
         \node[anchor=north west] at (5.4, -0.05) {\scriptsize{7}};
         \node[anchor=north west] at (6.4, -0.05) {\scriptsize{8}};
         \node[anchor=north west] at (7.4, -0.05) {\scriptsize{9}};
          \node[anchor=north west] at (8.4, -0.05) {\scriptsize{10}};
         \node[anchor=north west] at (9.4, -0.05) {\scriptsize{11}};
         \node[anchor=north west] at (10.4, -0.05) {\scriptsize{12}};
    \end{tikzpicture}
\end{center}
\vspace{-0.2cm}
It is easy to see that the map described by us is a bijection.

Thus, we obtain $a_n$ and can easily derive~\eqref{gf-length-3}. The proof is complete.
\end{proof}

\section{Avoidance of pairs of patterns}\label{pairs-sec}

\subsection{Pair $(P_1, P_2)$}

In order to present our results for the pair \((P_1, P_2)\), we need the notion of a \textbf{(2+2)}-free poset and the description of the bijection \(\Omega\) from the set of Stoimenow matchings to \textbf{(2+2)}-free posets, introduced in~\cite{Bousquet-Claesson-Dukes-Kitaev-DukesParvi}. A poset \(P = (X, \preceq)\) is \textbf{(2+2)}-{\it free} if it does not contain an induced subposet isomorphic to \textbf{(2+2)}, the union of two disjoint 2-element chains. We let \(\mathcal{P}_n\) denote the set of \textbf{(2+2)}-free posets with \(n\) elements, and \(\mathcal{P}_n(X)\) denote the set of \textbf{(2+2)}-free posets of length \(n\) that avoid a given set of patterns \(X\). 

The map \(\Omega\) sends arcs in \(M \in \mathcal{M}_n\) to elements of a poset \(P = \Omega(M) \in \mathcal{P}_n\) such that for any two arcs \([a_i, b_i]\) and \([a_j, b_j]\), we have \(b_i < a_j\) if and only if \(\Omega([a_i, b_i]) \prec \Omega([a_j, b_j])\).

A poset avoids \textbf{(3+1)} if it does not contain an induced subposet isomorphic to the disjoint union of a 3-element chain and a 1-element chain. The poset \textbf{N} is the poset on the elements in $\{a,b,c,d\}$ with the relations $a < c$, $a < d$, and $b < d$. A poset is \textbf{N}-avoiding if it does not contain an induced subposet isomorphic to \textbf{N}.

\begin{thm}\label{thm-P1-P2}
    The g.f.\ for $\M(P_1,P_2)$ is given by 
     \begin{align}\label{A(x)-P_1-P_2}
        A(x)=\frac{(1-x)^3}{1-4x+5x^2-3x^3}.
     \end{align}
The corresponding sequence is {\rm A116703} in {\rm \cite{OEIS}}.
    \end{thm}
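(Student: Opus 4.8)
My approach is to enumerate $\mathcal{M}_n(P_1,P_2)$ by translating the problem into a question about $(\mathbf{2+2})$-free posets via the bijection $\Omega$, because the two patterns $P_1$ and $P_2$ each consist of four arcs in simple chain/crossing configurations, and the ``overlap'' relation among arcs is exactly the poset order transported by $\Omega$. The first step is to determine which induced subposets of $P=\Omega(M)$ are forced or forbidden by the conditions $M\in\mathcal{M}_n$ (Stoimenow) and $M\in\mathcal{M}_n(P_1,P_2)$. Since Type 1 and Type 2 arcs are forbidden in any Stoimenow matching, and $P_2$ is the pattern of four arcs whose overlap poset is a $3$-element chain with a comparable element — on closer inspection $P_2$ is the matching $\{[1,2],[3,4],[5,6],[7,8]\}$-style nesting-then-crossing; I will compute its $\Omega$-image carefully. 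I expect that avoiding $P_1$ together with $P_2$, after accounting for the Stoimenow restriction, corresponds precisely to $\mathcal{P}_n$ avoiding the posets $(\mathbf{3+1})$ and $\mathbf{N}$ (this is why the paper introduces both of these notions immediately before the theorem). So the reformulation I aim to prove is
\[
|\mathcal{M}_n(P_1,P_2)| = |\mathcal{P}_n(\mathbf{3+1},\mathbf{N})|.
\]

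Granting that reformulation, the second step is to enumerate $(\mathbf{2+2})$-free, $(\mathbf{3+1})$-free, $\mathbf{N}$-free posets. A poset that is simultaneously $(\mathbf{2+2})$-free and $(\mathbf{3+1})$-free is an \emph{interval order that is also a semiorder-like object}; more usefully, $(\mathbf{2+2})$-free plus $(\mathbf{3+1})$-free posets are exactly the \emph{interval orders of a very restricted shape}, and adding $\mathbf{N}$-freeness collapses the structure further. Concretely, in a $(\mathbf{2+2})$-free poset the down-sets are linearly ordered by inclusion, so the poset is stratified into ``levels'' $L_0, L_1, \dots$; I would analyze how an antichain at one level can be related to the next level, showing that $\mathbf{N}$-avoidance forces each element of a level to be comparable to either all or none of the previous level's maximal elements, and $(\mathbf{3+1})$-avoidance bounds the chain-heights. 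This should yield a transfer-matrix / linear recurrence on the level structure. The generating function $\frac{(1-x)^3}{1-4x+5x^2-3x^3}$ has denominator $1-4x+5x^2-3x^3$, so I expect a degree-$3$ linear recurrence $|\mathcal{M}_n| = 4|\mathcal{M}_{n-1}| - 5|\mathcal{M}_{n-2}| + 3|\mathcal{M}_{n-3}|$ for $n$ large, which I would extract from a state space of bounded size (the relevant state being something like the number of maximal elements at the current top level, capped at $2$ or $3$ by $(\mathbf{3+1})$-freeness). Matching the initial terms $1,2,5,13,33$ then pins down the rational function, and the identification with OEIS A116703 follows by comparing the closed form.

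\textbf{The main obstacle.}
The hard part will be the combinatorial translation in the first step: verifying precisely that the avoidance of $P_1$ and $P_2$ in a \emph{Stoimenow} matching (not an arbitrary matching) corresponds exactly to $(\mathbf{3+1})$- and $\mathbf{N}$-avoidance in the image poset. The subtlety is that $\Omega$ only records the overlap relation, losing the finer shuffle information among mutually overlapping arcs; one must check that, given the Stoimenow constraint, whenever the overlap poset contains a $(\mathbf{3+1})$ or an $\mathbf{N}$ one can always realize the forbidden arc pattern $P_1$ or $P_2$ as an honest submatching, and conversely that a $(\mathbf{3+1})$- and $\mathbf{N}$-free overlap poset cannot hide such a pattern. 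I would handle the converse direction by a direct structural argument on the level decomposition (if no $P_1$, $P_2$ occurs then any four arcs realizing $(\mathbf{3+1})$ or $\mathbf{N}$ in the overlap poset would, together with the non-occurrence of Type 1/Type 2 arcs, be forced into a $P_1$ or $P_2$ shuffle), and the forward direction by exhibiting explicit sub-configurations. Once the poset reformulation is secured, the enumeration is a routine finite-state computation, so I do not anticipate difficulty there beyond careful bookkeeping of the level-transition rules.
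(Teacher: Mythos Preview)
Your approach is essentially identical to the paper's: the proof there invokes results from \cite{LvKitZhang} establishing that $\Omega$ sends $\mathcal{M}_n(P_1)$ bijectively to $\mathcal{P}_n(\mathbf{3{+}1})$ and $\mathcal{M}_n(P_2)$ to $\mathcal{P}_n(\mathbf{N})$, so that $|\mathcal{M}_n(P_1,P_2)|=|\mathcal{P}_n(\mathbf{3{+}1},\mathbf{N})|$, and then cites the enumeration of $(\mathbf{2{+}2})$-, $(\mathbf{3{+}1})$-, $\mathbf{N}$-free posets from \cite{DisantoPerPinRin} for the generating function. The only difference is that the paper outsources both the translation and the enumeration to existing literature rather than working them out directly as you propose.
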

    
 \begin{proof} 
From Theorem 4.2 (resp., Theorem 4.3) in~\cite{LvKitZhang}, it follows that \(\Omega\) maps bijectively \(\mathcal{M}_n(P_1)\) to \(\mathcal{P}_n(\text{\textbf{3+1}})\) (resp., \(\mathcal{M}_n(P_2)\) to \(\mathcal{P}_n(\textbf{N})\)). Thus, \(|\mathcal{M}_n(P_1, P_2)| = |\mathcal{P}_n(\text{\textbf{3+1}}, \textbf{N})|\) for all \(n \geq 0\), and (\ref{A(x)-P_1-P_2}) follows from Section 3.4 in~\cite{DisantoPerPinRin}; our g.f.\ is that in~\cite{DisantoPerPinRin}, plus 1 to account for the empty matching.
\end{proof}

\subsection{Pair $(P_1, P_3)$}\label{subsec-P1-P3}

\begin{thm}\label{thm-P1-P3}
    The g.f.\ for $\M(P_1,P_3)$ is given by 
    \begin{align}\label{A(x)-P_1-P_3}
        A(x)=\frac{1-6x+15x^2-19x^3+13x^4-5x^5}{(1-x)^5(1-2x)}.
    \end{align}
    \end{thm}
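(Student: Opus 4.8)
The plan is to decompose a matching $M\in\mathcal{M}_n(P_1,P_3)$ according to the relative position of the first arc $[1,b_1]$ and the last arc $[a_n,2n]$, mimicking the three-case structure used in the alternative proof of Theorem~\ref{MnP3Cn}, but now imposing $P_1$-avoidance on top of $P_3$-avoidance. As before, Case~1 is the $n$-crossing, contributing $\frac{1}{1-x}$; the interesting restrictions come from Cases~2 and~3, where $[1,b_1]$ and $[a_n,2n]$ are disjoint. The key point is that adding $P_1$-avoidance further constrains which arcs may be nested under $[1,b_1]$ and under $[a_n,2n]$ and how the ``middle'' arcs (those with both endpoints strictly between $b_1$ and $a_n$) may interleave; since $P_1$ is a length-4 pattern with a long arc straddling a nested pair, the effect is to forbid certain arcs from being nested inside the arcs $X_j$ in Figure~\ref{case3-P3}, collapsing the auxiliary generating function $H(x)$ from the $P_3$-only analysis down to something rational. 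So the first step is to re-run the Case~2 count (it should be essentially unchanged, $P_1$ being unobstructive there because there are no three-deep nestings), and the second step is to re-derive the Case~3 structure: identify exactly which of the intervals $A,B,C,D,E,X_j,Y_j$ can still contain internal arcs once $P_1$ is forbidden, and recompute the analogue $\widetilde H(x)$ of $H(x)$.

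Concretely, I expect the main work to be the $\widetilde H(x)$ computation: in the $P_3$-only case, an interval $I$ under an arc could itself contain arbitrarily deeply nested sub-arcs, each again of type $H(x)$, giving the quadratic equation~\eqref{equ-H3} whose solution involves $\sqrt{1-4x}$. With $P_1$ also forbidden, a nested arc inside $I$ cannot in turn carry its own nested arcs in the presence of the straddling arc that defines $I$ together with the long outer arc $[1,b_1]$ or $[a_n,2n]$ — so the recursion should truncate after one level, turning the quadratic into a linear equation and yielding a rational $\widetilde H(x)$. I would set up the decomposition of $I$ exactly as in Figure~\ref{structureH(x)-P3} (the alternating blocks $K_0,I_1,K_1,\ldots,I_k,K_k$ with the forced $\alpha$ before $I_1$ and $\beta$ after $I_k$), but with each $I_j$ now contributing only the ``no internal arcs'' term $\frac{x}{1-2x}$ rather than $\widetilde H(x)$, and sum the geometric series to get a closed rational form.

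Once $\widetilde H(x)$ is in hand, the third step is bookkeeping: substitute it into the Case~3 formula (the analogue of~\eqref{F(x)-P3-formula}, with the same outer factors $\frac{1}{1-x}$ for the run of openers $(b_1-i)\cdots(b_1-1)$, the $x^2$ for $[1,b_1]$ and $[a_n,2n]$, the $\bigl(1+\frac{x}{1-2x}\bigr)^2$ for the $B$ and $D$ intervals, and the geometric series $\frac{1}{1-(1+\frac{x^2}{1-2x})\widetilde H(x)}$ over the $X_j$-$Y_j$ pairs), add the Case~1 and Case~2 contributions, and simplify. I would then verify that the result equals $\frac{1-6x+15x^2-19x^3+13x^4-5x^5}{(1-x)^5(1-2x)}$ and cross-check the first several coefficients against $1,1,2,5,13,32,73,156,318,\ldots$ from Table~\ref{tab-results}. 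The main obstacle is getting the Case~3 structural analysis exactly right — in particular, being careful about whether $P_1$ forbids internal arcs only in the $X_j$ intervals or also affects the interleaving of the $Y_j$ blocks and the interaction between $C$, the $X_j$'s, and the long arcs — since an off-by-one in which intervals may carry sub-arcs will change the rational function; a careful small-case check ($n\le 5$ by hand or by the coefficients above) is the safeguard.
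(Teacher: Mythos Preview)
Your overall framework---the three-case split on the relation between $[1,b_1]$ and $[a_n,2n]$, with Case~1 and Case~2 carried over unchanged from the $P_3$-only analysis---is exactly what the paper does. The gap is in your Case~3 plan. You propose to keep the outer architecture of formula~\eqref{F(x)-P3-formula} intact (the $\frac{1}{1-x}$ for the bridging run $(b_1-i)\cdots(b_1-1)$, the $(1+\frac{x}{1-2x})^2$ for $B$ and $D$, the $(1+\frac{x^2}{1-2x})$ factors for the $Y_j$'s) and only replace $H(x)$ by a one-level-truncated $\widetilde H(x)$. This will not give the right answer, because $P_1$-avoidance is not a local constraint on the interiors of the $X_j$'s---it dismantles the outer scaffolding as well. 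Concretely: once any middle arc exists, a bridging arc $[c,d]$ with $c<b_1<a_n<d$ together with $[1,b_1]$, that middle arc, and $[a_n,2n]$ already form $P_1$, so the outer $\frac{1}{1-x}$ factor must be dropped; a nonempty $Y_j$ (which begins with a $\beta$ and ends with an $\alpha$) together with the adjacent $X_j$-arc and $[a_n,2n]$ again produces $P_1$, so every $Y_j$ is empty; and $B$ (resp.\ $D$) can carry only $\alpha$'s (resp.\ only $\beta$'s), not a free $\{\alpha,\beta\}$-string. Finally, the $X_j$'s are not uniform: only $X_1$ may contain $\alpha$'s and only $X_k$ may contain $\beta$'s, while $X_2,\ldots,X_{k-1}$ are bare arcs.

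The paper therefore abandons the $H(x)$-substitution idea entirely and instead splits Case~3 further into ``exactly one middle arc'' (contributing $\bigl(\tfrac{x}{1-x}\bigr)^2\cdot\tfrac{x}{1-2x}$) and ``at least two middle arcs'' (contributing $\tfrac{x^4}{(1-x)^5}$), assembling
\[
A(x)=\frac{1}{1-x}\Bigl(1+\frac{x^2}{1-2x}\Bigr)+\Bigl(\frac{x}{1-x}\Bigr)^{2}\frac{x}{1-2x}+\frac{x^4}{(1-x)^5}.
\]
Your caveat in the last sentence (``whether $P_1$ forbids internal arcs only in the $X_j$ intervals or also affects the interleaving of the $Y_j$ blocks'') is precisely where the argument lives; the answer is that it affects everything, and the resulting structure is rigid enough that no auxiliary $\widetilde H(x)$ is needed at all.
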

\begin{proof}
Given a matching \( M \in \mathcal{M}_n(P_1, P_3) \), the first two cases to consider for \( M \), namely, when \( M \) is an \( n \)-crossing and when there are no arcs between \( b_1 \) and \( a_n \), are the same as those in our analysis of \( \mathcal{M}(P_3) \) in Section~\ref{subsec-P3-avoid}. Together, these cases contribute $\frac{1}{1 - x} \left( 1 + \frac{x^2}{1 - 2x} \right)$ to \( A(x) \). The remaining two cases are as follows.

\begin{figure}
 \centering
    \begin{tikzpicture}[scale = 0.4] 
        \draw (-0.4,0) -- (1,0);
        \draw [dashed](1,0) -- (3,0);
         \draw (3,0) -- (4,0);
         \draw [dashed](4,0) -- (6,0);
         \draw (6,0) -- (7,0);
         \draw [dashed](7,0) -- (9,0);
         \draw (9,0) -- (10,0);
         \draw[dashed] (10,0) -- (12,0);
         \draw (12,0) -- (13,0);
         \draw[dashed] (13,0) -- (15,0);
         \draw (15,0) -- (16,0);
         \draw[dashed] (16,0) -- (18,0);
         \draw (18,0) -- (19.4,0);
        \foreach \x in {0,1,3,4,6,7,9,10,12,13,15,16,18,19}
        {
            \filldraw (\x,0) circle (2pt);
        }
    \draw[domain=0:90, smooth, variable=\t, shift={(2,0)}] 
    plot ({-cos(\t)}, {sin(\t)});
    \draw[domain=0:90, smooth, variable=\t, shift={(14,0)}] 
    plot ({-cos(\t)}, {sin(\t)});
    \draw[domain=0:90, smooth, variable=\t, shift={(5,0)}] 
    plot ({cos(\t)}, {sin(\t)});
    \draw[domain=0:90, smooth, variable=\t, shift={(17,0)}] 
    plot ({cos(\t)}, {sin(\t)});
  
        \draw[black] (0,0) arc (180:0:2 and 1.5);
        \draw[black] (3,0) arc (180:0:3.5 and 1.5);
        \draw[black] (7,0) arc (180:0:2.5 and 1.5);
        \draw[black] (9,0) arc (180:0:3.5 and 1.5);
        \draw[black] (15,0) arc (180:0:2 and 1.5);

        \node[anchor=north west] at (-0.7, -0.05) {\scriptsize{1}};
        \node[anchor=north west] at (3.2, -0.05) {\scriptsize{$b_1$}};
        \node[anchor=north west] at (6.4, -0.05) {\scriptsize{$x$}};
        \node[anchor=north west] at (11.4, -0.05) {\scriptsize{$y$}};
        \node[anchor=north west] at (14.2, -0.05) {\scriptsize{$a_n$}};
        \node[anchor=north west] at (18.2, -0.05) {\scriptsize{$2n$}};
    \end{tikzpicture}
\caption{The structure of $(P_1,P_3)$-avoiding Stoimenow matchings in Case~3.}\label{fig-pair-P1-P3}
\end{figure}
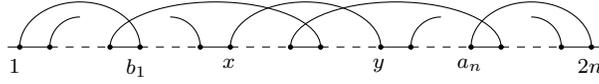 

\noindent    
\textbf{Case 3.} If there is only one arc, denoted \([x, y]\), between \( b_1 \) and \( a_n \), as shown in Figure~\ref{fig-pair-P1-P3}, then the arcs with closers \( b_2, b_3, \ldots, x-1 \) have openers that lie under the arc \([1, b_1]\). Similarly, the arcs with openers \( y+1, \ldots, a_{n-2}, a_{n-1} \) have closers that lie under the arc \([a_n, 2n]\). All these arcs are counted by $\left( \frac{x}{1 - x} \right)^2$. Moreover, there may be closers of arcs whose openers lie under the arc \([1, b_1]\), and openers of arcs whose closers lie under the arc \([a_n, 2n]\), which can be shuffled arbitrarily under the arc \([x, y]\) without creating a forbidden configuration. These are counted by $\frac{x}{1 - 2x}$.

\noindent
\textbf{Case 4. } Suppose there are at least two arcs between \( b_1 \) and \( a_n \). Using Figure~\ref{case3-P3}, which relates to the \( \mathcal{M}(P_3) \) analysis in Case 4 of Section~\ref{subsec-P3-avoid}. In this case, $B$ (resp., $D$) can only contain closers (resp., openers) whose corresponding openers (resp., closers) lie under the arc $[1,b_1]$ (resp., $[a_n, 2n]$). Such closers and openers, together with the arcs $[1,b_1]$ and $[a_n, 2n]$, are counted by $\left(\frac{x}{1-x}\right)^2$.  
Moreover, there must exist intervals $X_1$ and $X_k$ that can only contain closers with arcs whose openers lie under the arc $[1,b_1]$ and openers with arcs whose closers lie under the arc $[a_n,2n]$, which gives $\left(\frac{x}{1-x}\right)^2$. The remaining possibly empty intervals $X_2,\dots, X_{k-1}$, which do not intersect with any other arcs, contribute $\frac{1}{1-x}$. Note that $Y_i$'s, $i\in \{1,\ldots,k\}$, must be empty to avoid the pattern $P_1$. Thus, this case totally gives $(\frac{x}{1-x})^2 (\frac{x}{1-x})^2 \frac{1}{1-x}=\frac{x^4}{(1-x)^5}$.

    By summing over the above four cases, we obtain
    \begin{small}
    \begin{align}\label{A(x)-P1-P3}
    A(x)=\frac{1}{1-x}\left(1+\frac{x^2}{1-2x}\right)+\left(\frac{x}{1-x}\right)^2\frac{x}{1-2x}+\frac{x^4}{(1-x)^5}.
   \end{align}
   \end{small}
   Simplifying this expression, we derive~\eqref{A(x)-P_1-P_3}.
\end{proof}

\subsection{Pairs $(P_1, P_4)$, $(P_1, P_5)$ and $(P_2, P_3)$}\label{subsec-P1-P4}

\begin{thm}\label{thm-A005183}
    The g.f.\ for $\M(P_1,P_4)$, $\M(P_1, P_5)$ and $\M(P_2,P_3)$ is given by 
    \begin{align}\label{A(x)-P_1-P_4}
        A(x)=\frac{1-4x+5x^2-x^3}{(1-x)(1-2x)^2}.
    \end{align}
    The corresponding sequence is {\rm A005183} in {\rm \cite{OEIS}}.
    \end{thm}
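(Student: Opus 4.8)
The plan is to handle the three pairs in turn, reducing each to a structural case analysis of the same flavour as the proof of Theorem~\ref{MnP3Cn}. Since the target sequence A005183 has the closed form $\frac{1-4x+5x^2-x^3}{(1-x)(1-2x)^2}$, it suffices for each pair to derive a rational generating function and check algebraically that it simplifies to~\eqref{A(x)-P_1-P_4}; I expect each of the three derivations to produce a sum of a handful of rational terms whose common denominator is $(1-x)(1-2x)^2$, so the final simplification is routine.

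For the pair $(P_1,P_4)$ I would follow the decomposition used for $\mathcal{M}(P_3)$ and $(P_1,P_3)$: split according to whether $M$ is an $n$-crossing, whether there are no arcs strictly between $b_1$ and $a_n$, whether there is exactly one such arc, or at least two. The first two cases again contribute $\frac{1}{1-x}\bigl(1+\frac{x^2}{1-2x}\bigr)$. Avoiding $P_4$ in addition to $P_1$ forces the middle part to be very rigid — in the ``at least two arcs'' case the intervals $Y_i$ must be empty (as in Case~4 of Theorem~\ref{thm-P1-P3}) and the nesting structure of the $X_i$'s becomes a chain with only binary-string slack — so each case contributes an explicitly rational term. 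For $(P_1,P_5)$ and $(P_2,P_3)$ the cleanest route is to establish a Wilf-equivalence reducing them to $(P_1,P_4)$: I would either produce direct bijections between $\mathcal{M}_n(P_1,P_5)$, $\mathcal{M}_n(P_2,P_3)$ and $\mathcal{M}_n(P_1,P_4)$ (adapting the single-pattern bijections of~\cite{LvKitZhang} that already show $P_4$, $P_5$ and related patterns are Wilf-equivalent), or, failing a clean bijection, run the same four-case structural analysis for each pair independently and observe the generating functions agree. Given that $P^3_4$ and $P^3_5$ are already known to be Wilf-equivalent and that the single-pattern results group $P_4$ with $P_5$, the bijective route for $(P_1,P_4)\leftrightarrow(P_1,P_5)$ should be short; the $(P_2,P_3)$ case is the one most likely to need its own structural treatment, using the gluing/splitting machinery of Section~\ref{subsec-P2-avoiding} to peel off the first irreducible block.

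The main obstacle I anticipate is the $(P_2,P_3)$ case. Unlike the other two pairs, it does not obviously ``live under'' the $\mathcal{M}(P_3)$ picture of Figures~\ref{case3-P3}--\ref{structureH(x)-P3}, because imposing $P_2$-avoidance interacts with the reducibility decomposition rather than with the $b_1/a_n$ nesting decomposition. I would therefore expect to set up a separate functional equation: write $A(x)=\frac{1}{1-A_{\mathrm{irr}}(x)}$-type relation where $A_{\mathrm{irr}}(x)$ counts irreducible $(P_2,P_3)$-avoiding matchings, use the gluing procedure to show an irreducible block avoiding $P_2$ is obtained from a smaller $(P_2,P_3)$-avoiding matching by inserting a reduction arc, and then impose $P_3$-avoidance to constrain where that arc and the interior binary strings can go. Solving that equation and checking it yields the same rational function as the $(P_1,P_4)$ computation is the crux; everything else is bookkeeping.
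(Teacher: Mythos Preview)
Your plan for $(P_1,P_5)$ (reverse to $(P_1,P_4)$) and for $(P_2,P_3)$ (peel off the first irreducible block via the gluing/splitting procedure) both match the paper's route; your last paragraph is in fact prescient, since the paper runs exactly a separate splitting-based argument for $(P_2,P_3)$. The key step you have not identified is that when $M$ splits as $(M_1,M_2)$ with both pieces nonempty, $P_3$-avoidance forces $M_1$ to avoid the length-$3$ pattern $P^3_4$ and $M_2$ to avoid $P^3_5$; by Theorem~\ref{thm-conj-case-k=3} each piece is then counted by $\frac{x}{1-2x}$, and the functional equation closes at once. Your tentative ``$A(x)=\frac{1}{1-A_{\mathrm{irr}}(x)}$'' formulation would fail as stated, because $P_3$-avoidance is not preserved under free concatenation of irreducible blocks---you need the reduction-arc splitting, not the block product.

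The genuine gap is in your treatment of $(P_1,P_4)$. The $b_1/a_n$ picture with intervals $B,C,Y_i,X_i,D$ in Figure~\ref{case3-P3} is a \emph{consequence} of $P_3$-avoidance: it is $P_3$ that forces every interval under an interior arc to be a shuffle of $\alpha$'s and $\beta$'s, and that is what gives the $Y_i,X_i$ blocks their meaning and makes $H(x)$ well-defined. When you drop $P_3$ and impose $P_4$ instead, that structure has no reason to persist, so ``the intervals $Y_i$ must be empty (as in Case~4 of Theorem~\ref{thm-P1-P3})'' has no content---there are no $Y_i$'s to speak of. The paper therefore does not use the $b_1/a_n$ decomposition for $(P_1,P_4)$ at all. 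Instead it observes that in a $(P_1,P_4)$-avoiding matching which is not a full crossing, the arcs with opener $>b_1$ break into consecutive blocks $I_1,I_2,\ldots$ (Figure~\ref{fig-P_1-P_4}); each $I_j$ with $j\ge 2$ is forced to be a plain crossing, while only $I_1$ absorbs the closers coming from under $[1,b_1]$. This yields $A(x)=\frac{1}{1-x}+\frac{x^2}{(1-2x)^2}$ directly. Your Cases~1--2 do happen to produce the correct partial sum $\frac{1-x}{1-2x}$, but to finish along your line you would have to build a fresh structural analysis of the middle region for Cases~3--4, and the paper's block picture is the efficient replacement.
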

    
\begin{figure}[t]
 \centering
    \begin{tikzpicture}[scale = 0.4] 
        \draw [dashed](-0.6,0) -- (12,0);
        \draw (12,0) -- (16,0);
        \draw [dashed](16,0) -- (18,0);
        \draw (18,0) -- (21,0);
        \draw (21,0) -- (23,0);
        \draw [dashed](23,0) -- (25,0);
        \draw (25,0) -- (28,0);
        \draw [dashed](28,0) -- (29,0);
        \foreach \x in {0,3,5,6,7,8,10,11,12,13,14,15,16,18,19,20,21,22,23,25,26,27,28}
        {
            \filldraw (\x,0) circle (2pt);
        }

\draw[domain=0:90, smooth, variable=\t, shift={(5,0)}] 
    plot ({cos(\t)}, {sin(\t)});
\draw[domain=0:90, smooth, variable=\t, shift={(7,0)}] 
    plot ({cos(\t)}, {sin(\t)});
    \draw[domain=0:90, smooth, variable=\t, shift={(10,0)}] 
    plot ({cos(\t)}, {sin(\t)});
        
        \draw[black] (0,0) arc (180:0:1.5 and 1.5);
        \draw[black] (5,0) arc (180:0:3.5 and 1.5);
        \draw[black] (7,0) arc (180:0:3 and 1.5);
        \draw[black] (10,0) arc (180:0:2 and 1.5);
        \draw[black] (15,0) arc (180:0:2 and 1.5);
        \draw[black] (16,0) arc (180:0:2 and 1.5);
        \draw[black] (18,0) arc (180:0:1.5 and 1.5);
        \draw[black] (22,0) arc (180:0:2 and 1.5);
        \draw[black] (23,0) arc (180:0:2 and 1.5);
        \draw[black] (25,0) arc (180:0:1.5 and 1.5);
        \node[anchor=north west] at (8.8, 2.8) {\footnotesize{$I_1$}};
        \node[anchor=north west] at (17.2, 2.8) {\footnotesize{$I_2$}};
        \node[anchor=north west] at (24, 2.8) {\footnotesize{$I_3$}};
          \node[anchor=north west] at (3, 1.2) {\footnotesize{$\cdots$}};
     \node[anchor=north west] at (8, 1.2) {\footnotesize{$\cdots$}};
     \node[anchor=north west] at (28, 1.2) {\footnotesize{$\cdots$}};
        \node[anchor=north west] at (-0.6, -0.1) {\scriptsize{1}};
        \node[anchor=north west] at (2.4, -0.1) {\scriptsize{$b_1$}};
        \node[anchor=north west] at (4.4, -0.1) {\scriptsize{$x_1$}};
        \node[anchor=north west] at (6.4, -0.1) {\scriptsize{$x_2$}};
        \node[anchor=north west] at (9.3, -0.1) {\scriptsize{$x_k$}};
        \node[anchor=north west] at (11.4, -0.1) {\scriptsize{$y_1$}};
        \node[anchor=north west] at (12.4, -0.1) {\scriptsize{$y_2$}};
        \node[anchor=north west] at (13.4, -0.1) {\scriptsize{$y_k$}};

    \end{tikzpicture}
\caption{The structure of $(P_1, P_4)$-avoiding Stoimenow matchings.}\label{fig-P_1-P_4}
\end{figure}
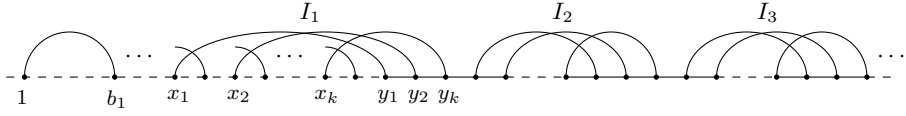
    
\begin{proof}
    \textbf{Pairs $(P_1,P_4)$ and $(P_1, P_5)$.}
 Using the reverse operation, we only need to consider the pair $(P_1, P_4)$. Given \( M \in \mathcal{M}_n(P_1, P_4) \), if \( M \) is an \( n \)-crossing, where $n \geq 1$, then its contribution to \( A(x) \) is \( \frac{1}{1 - x} \). If $M$ is not an $n$-crossing, we let The block \(I_i\) is composed of a maximal \(k\)-crossing (\(k \geq 1\)) formed by arcs \([x_1, y_1], [x_2, y_2], \ldots, [x_k, y_k]\) where \(x_1 > b_1\) and \(x_1 < \cdots < x_k < y_1 < \cdots < y_k\), combined with additional arcs \([a, b]\) when \(x_i < a < b < x_{i+1}\) holds for some \(1 \leq i \leq k - 1\), along with closers positioned within the intervals \((x_i, x_{i+1})\) and \((x_k, y_1)\). Then we have $I_2$, $I_3$, etc, is an $|I_j|$-crossing, $j=2,3,\ldots$, as shown in Figure~\ref{fig-P_1-P_4}. 
 This contributes a factor of $\sum_{j \geq 0} \left(\frac{x}{1-x}\right)^j$ in (\ref{aux-eq-1}) below. On the other hand, the block $I_1$ (that must exist), formed by  the intervals $(x_1, x_2)$, $(x_2,x_3), \ldots, (x_k,y_1)$,  possibly contains closers with arcs whose openers lie under the arc $[1,b_1]$, which give the contribution of $\sum_{k\geq 1}x^k \left(\frac{1}{1-x}\right)^k$ to the total number of matchings in the case when $M$ is not an $n$-crossing:     
 \begin{equation}\label{aux-eq-1}
    \frac{x}{1-x}\,\sum_{j \geq 0} \left(\frac{x}{1-x}\right)^j \sum_{k\geq 1}x^k\left(\frac{1}{1-x}\right)^k= \frac{x}{(1-x)^2(1-\frac{x}{1-x})^2},
    \end{equation}
    where the factor of $\frac{x}{1-x}$ counts the arcs $[1,b_1]$, $[2,b_1+1],\ldots,[m,b_1+m-1]$, with maximum possible $m$. Thus, by summing the above two cases, we obtain 
\begin{align*}
    A(x)=\frac{1}{1-x}+\frac{x}{(1-x)^2(1-\frac{x}{1-x})^2},
\end{align*}
which coincides with~\eqref{A(x)-P_1-P_4}.\\[-3mm]

\noindent
\textbf{Pair $(P_2, P_3)$.}
Given a matching $M \in \mathcal{M}(P_2, P_3)$, we use the splitting procedure in the analysis of $\mathcal{M}(P_2)$ in Section~\ref{subsec-P2-avoiding}. This procedure is also well-defined in $\mathcal{M}(P_2, P_3)$, since the method of adding a reduction arc in the glue rule does not introduce any forbidden patterns. 
Suppose $M$ can be split into $M_1$ and $M_2$.
Let $B(M)$ be the number of irreducible blocks in $M$ and $\MM$ be the matching formed by its first irreducible block. We have the following three cases.
\begin{itemize}
    \item If $B(M) > 1$ and $\text{redarc}(\MM) \neq [1,b_1]$, then $M_1, M_2\neq\emptyset$. Moreover, we must have $M_1\in \M(P_2, 
\begin{tikzpicture}[scale = 0.25]
    \draw (-0.4,0) -- (5.4,0);
    \foreach \x in {0,1,2,3,4,5}
    {
        \filldraw (\x,0) circle (2pt);
    }
    \draw[black] (0,0) arc (180:0:0.5 and 1);
    \draw[black] (2,0) arc (180:0:1 and 1);
    \draw[black] (3,0) arc (180:0:1 and 1);
\end{tikzpicture})$
 and $M_2\in\M(P_2, 
\begin{tikzpicture}[scale = 0.25]
    \draw (-0.4,0) -- (5.4,0);
    \foreach \x in {0,1,2,3,4,5}
    {
        \filldraw (\x,0) circle (2pt);
    }
    \draw[black] (0,0) arc (180:0:1 and 1);
    \draw[black] (1,0) arc (180:0:1 and 1);
    \draw[black] (4,0) arc (180:0:0.5 and 1);
\end{tikzpicture})$. This means $M_1,M_2\in\M(\begin{tikzpicture}[scale = 0.25]
    \draw (-0.4,0) -- (5.4,0);
    \foreach \x in {0,1,2,3,4,5}
    {
        \filldraw (\x,0) circle (2pt);
    }
    \draw[black] (0,0) arc (180:0:0.5 and 1);
    \draw[black] (2,0) arc (180:0:1 and 1);
    \draw[black] (3,0) arc (180:0:1 and 1);
\end{tikzpicture})$, each counted by $\frac{x}{1-2x}$ via Theorem~\ref{thm-conj-case-k=3}. Thus, the reduction arc of $\MM$, together with $M_1$ and $M_2$, contributes a factor of $x\left(\frac{x}{1 - 2x}\right)^2$ to $A(x)$. 
\item If $B(M)=1$, then $M_2=\emptyset$, and therefore $M_1 \in \M_{n-1}(P_2,P_3)$, which gives $xA(x)$.
\item If $B(M)>1$ and redarc$(\MM)=[1,b_1]$, then $M_1=\emptyset$, $M_2\neq\emptyset$, and $M_2\in\M(P_2,
\begin{tikzpicture}[scale = 0.25]
    \draw (-0.4,0) -- (5.4,0);
    \foreach \x in {0,1,2,3,4,5}
    {
        \filldraw (\x,0) circle (2pt);
    }
    \draw[black] (0,0) arc (180:0:0.5 and 1);
    \draw[black] (2,0) arc (180:0:1 and 1);
    \draw[black] (3,0) arc (180:0:1 and 1);
\end{tikzpicture})$.
The arc $[1,b_1]$, together with $M_2$, is counted by $\frac{x^2}{1-2x}$.
\end{itemize}
Summarizing the above three cases, we obtain  
\[
A(x) = x\left(\frac{x}{1 - 2x}\right)^2 + xA(x) + \frac{x^2}{1 - 2x}.
\]  
By solving this equation, we derive~\eqref{A(x)-P_1-P_4}. The proof is complete.
\end{proof}

\subsection{Pairs $(P_2,P_4)$, $(P_2, P_5)$, $(P_3,P_4)$, $(P_3, P_5)$ and $(P_4, P_5)$}\label{subsec-A001519}

\begin{thm}\label{thm-A001519}
The g.f.\ for each of the pairs $\mathcal{M}(P_2, P_4)$, $\mathcal{M}(P_2, P_5)$, $\mathcal{M}(P_3, P_4)$, $\mathcal{M}(P_3, P_5)$, and $\mathcal{M}(P_4, P_5)$ is given by
\begin{align} \label{gf-P_2-P_4}
    A(x) = \frac{1 - 2x}{1 - 3x + x^2}.
\end{align}
Moreover, we have $|\mathcal{M}_n| = F_{2n-1}$, where $F_{2n-1}$ denotes the $(2n - 1)$-st Fibonacci number. The corresponding sequence is {\rm A001519} in {\rm \cite{OEIS}}.
\end{thm}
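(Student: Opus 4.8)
The plan is to cut the work down with a symmetry and then treat a small number of representative pairs by structural decomposition. Reading a matching on $\{1,\dots,2n\}$ from right to left --- replacing every arc $[a,b]$ by $[2n+1-b,\,2n+1-a]$ --- is an involution of $\M_n$; inspecting the eight-point diagrams shows that it fixes $P_2$ and $P_3$ while interchanging $P_4$ and $P_5$. Hence $\M_n(P_2,P_5)$ is in bijection with $\M_n(P_2,P_4)$, $\M_n(P_3,P_5)$ with $\M_n(P_3,P_4)$, and $\M_n(P_4,P_5)$ is reversal-closed, so it suffices to show that each of $\M(P_2,P_4)$, $\M(P_3,P_4)$ and $\M(P_4,P_5)$ has generating function $\frac{1-2x}{1-3x+x^2}$.

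For the pair $(P_2,P_4)$ I would follow the splitting-procedure argument used for $(P_2,P_3)$ in the proof of Theorem~\ref{thm-A005183}. Split $M\in\M_n(P_2,P_4)$ into its first irreducible block $\MM$ and the remaining matching $M_2$, and let $M_1$ be $\MM$ with its reduction arc deleted. The key observations are that $P_4$ is ``an isolated arc lying entirely to the left of a three-arc crossing path $P^3_3$'', that such a three-arc path (like $P_2$) cannot be split across the boundary between $\MM$ and $M_2$, and that $\MM$ is never empty; together these force $M$ to avoid $P_4$ exactly when $\MM$ avoids $P_4$ and $M_2$ avoids $P^3_3$. As $P^3_3\subseteq P_2$, the admissible $M_2$ are precisely the matchings in $\M(P^3_3)$, enumerated by $\frac{1-x}{1-2x}$ by Theorem~\ref{thm-conj-case-k=3}; and, because the opener of a reduction arc is always inserted among the initial run of openers, adding it cannot create a copy of $P_4$, so irreducible $(P_2,P_4)$-avoiders with $n$ arcs are in bijection with arbitrary $(P_2,P_4)$-avoiders with $n-1$ arcs. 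This yields the functional equation $A(x)=1+xA(x)\cdot\frac{1-x}{1-2x}$, whose solution is $\frac{1-2x}{1-3x+x^2}$.

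For $(P_3,P_4)$ I would build on the case decomposition of $P_3$-avoiding matchings from the proof of Theorem~\ref{MnP3Cn}, refined as in the proof of Theorem~\ref{thm-P1-P3}: $M$ is an $n$-crossing; $[1,b_1]$ and $[a_n,2n]$ are disjoint with no arc between $b_1$ and $a_n$; exactly one arc lies between them; or at least two do. Superimposing $P_4$-avoidance should collapse the ``middle'' of the diagram, forcing the arcs strictly between $b_1$ and $a_n$ into a single crossing block and restricting which runs of openers and closers may be shuffled, so that each stratum contributes a rational function summing to $\frac{1-2x}{1-3x+x^2}$. For $(P_4,P_5)$, I would instead split on whether $M$ contains a copy of $P^3_3$: if not, $M$ automatically avoids both $P_4$ and $P_5$, contributing $\frac{1-x}{1-2x}$ by Theorem~\ref{thm-conj-case-k=3}; if so, the constraint that no $P^3_3$-copy of $M$ may have an arc lying entirely to its left or entirely to its right pins $M$ down to a rigid shape whose generating function is a short direct computation, and the two contributions again add to $\frac{1-2x}{1-3x+x^2}$. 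Finally, from $A(x)=\frac{1-2x}{1-3x+x^2}$ one reads $a_0=a_1=1$ and the recurrence $a_n=3a_{n-1}-a_{n-2}$; since $F_{2n+1}=3F_{2n-1}-F_{2n-3}$ and $F_1=1$, an easy induction gives $|\M_n(S)|=F_{2n-1}$ for $n\ge1$, matching {\rm A001519}.

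The hard part will be the structural analyses for $(P_3,P_4)$ and especially $(P_4,P_5)$: the stratifications must be made genuinely exhaustive and pairwise disjoint, and with two patterns forbidden simultaneously one has to track precisely which isolated arcs, crossing blocks and shuffled binary strings remain admissible in the middle of the diagram --- where both over- and under-counting are easy mistakes --- and then check that the rational functions obtained coincide. For $(P_2,P_4)$ the only delicate point is instead the verification that inserting a reduction arc never produces a copy of $P_4$ (the analogue of the claim already used for $P_2$ and $P_3$); granting this, the functional equation and its solution are immediate.
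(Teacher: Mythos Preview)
Your reversal reduction and the Fibonacci identification are exactly as in the paper. For $(P_2,P_4)$ your argument is correct and is a genuine variant of the paper's: the paper keeps the first irreducible block $M_1$ intact and observes that if the tail $M_2$ is nonempty then $M_1$ must be a full crossing while $M_2$ is an arbitrary $(P_2,P_4)$-avoider, giving $A(x)=1+\frac{x}{1-x}(A(x)-1)+xA(x)$; you instead apply the full splitting procedure, delete the reduction arc from $\MM$, and argue that the tail must avoid $P^3_3$, giving $A(x)=1+xA(x)\cdot\frac{1-x}{1-2x}$. Your key claim that inserting the reduction arc cannot create $P_4$ is correct for the reason you state --- its opener precedes the first closer, so no arc lies entirely to its left, and its closer sits just past the last opener, so no $P^3_3$ lies entirely to its right --- and this makes your decomposition parallel to the $(P_2,P_3)$ case in Theorem~\ref{thm-A005183}.

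For $(P_3,P_4)$ and $(P_4,P_5)$, however, what you have written are outlines with the substantive work deferred, and the outlines do not quite line up with what actually happens. In the paper, $(P_3,P_4)$ is handled in one stroke: one revisits Case~3 of Section~\ref{subsec-P3-avoid} and notes that, to avoid $P_4$, each interval $C,X_1,\ldots,X_k$ under an arc may contain only $\alpha$-closers (no nested arcs, no $\beta$-openers), which amounts to replacing $H(x)$ by $\frac{x}{1-x}$ in~\eqref{F(x)-P3-formula}; the arcs between $b_1$ and $a_n$ therefore form a \emph{noncrossing} sequence of single arcs with closers underneath, not the ``single crossing block'' you anticipate. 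For $(P_4,P_5)$ the paper does not split on the presence of $P^3_3$ but instead isolates a rightmost block $I'$ --- the maximal $k$-crossing containing $[a_n,2n]$ together with the material between its openers --- and a nonempty remainder $I''$ which must lie in $\M(P^3_3)$; the contribution of $I'$ is then computed directly as $\frac{x}{(1-x)\bigl(1-x(1+\frac{x}{1-2x})\bigr)}$. Your $P^3_3$-based dichotomy for $(P_4,P_5)$ may be workable, but the ``rigid shape'' is left entirely unspecified; to turn the proposal into a proof you would need to carry out precisely the stratified counts you flag as ``the hard part''.
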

\begin{proof}
 \textbf{Pairs $(P_2,P_4)$ and $(P_2, P_5)$.}
  Since reversing all matchings shows that $|\M_n(P_2,P_4)|=|\M_n(P_2, P_5)|$, we only need to consider the pair $(P_2,P_4)$. 

 For any $\emptyset \neq M \in \mathcal{M}_n(P_2, P_4)$, we use the {\it decomposition of the first irreducible block}. That is, let $M = (M_1, M_2)$, where $M_1$ represents the matching formed by the first irreducible block of $M$, and $M_2$ represents the matching formed by the remaining irreducible blocks of $M$. We have the following two cases:
 \begin{itemize}
     \item If $M_2 \neq \emptyset$, then $M_1$ must be an $|M_1|$-crossing to avoid the pattern $P_4$, and such $M_1$'s are counted by $\frac{x}{1 - x}$. Meanwhile, $M_2$ is a nonempty $(P_2, P_4)$-avoiding Stoimenow matching, and such $M_2$'s are counted by $A(x) - 1$. 
     \item If $M_2 = \emptyset$, then there is again a possiblly empty $(P_2, P_4)$-avoiding Stoimenow matching after removing the reduction arc. Thus, this case contributes a factor of $xA(x)$ to $A(x)$.
 \end{itemize}
Combining the cases above, we obtain
\[
A(x) = 1 + \frac{x}{1-x} (A(x) - 1) + xA(x),
\]
where 1 represents the empty matching. Solving this equation for \(A(x)\), we derive \(\eqref{gf-P_2-P_4}\).\\[-3mm]

\noindent
\textbf{Pairs $(P_3,P_4)$ and $(P_3, P_5)$.}
Because of the reverse operation, it suffices to consider \(\mathcal{M}(P_3, P_4)\). 

For $M \in \mathcal{M}_n(P_3, P_4)$, using the observations in Section~\ref{subsec-P3-avoid} related to the pattern $P_3$, the only difference arises in Case~3. Namely, in comparison with Figure~\ref{case3-P3}, there are no arcs, or openers of arcs whose closers lie within the arc $[a_n, 2n]$, inside the intervals $C, X_1, \ldots, X_k$ to avoid the pattern $P_4$. 
Therefore, each interval, along with the closers in it, is counted by $\frac{x}{1 - x}$. 

By replacing $H(x)$ with $\frac{x}{1 - x}$ in~\eqref{F(x)-P3-formula}, we obtain
\begin{align*}
  \scriptsize
  A(x) = \frac{1}{1 - x} \left( 1 + \frac{x^2}{1 - 2x} + x^2 \left( 1 + \frac{x}{1 - 2x} \right)^2  \frac{\frac{x}{1 - x}}{1 - \left(1 + \frac{x^2}{1 - 2x} \right) \frac{x}{1 - x}} \right).
\end{align*}
 Simplifying this expression, we obtain~\eqref{gf-P_2-P_4}.\\[-3mm]

\noindent
\textbf{Pair $(P_4, P_5)$.}
Given \( M \in \mathcal{M}_n(P_4, P_5) \), if \( M \) is an \( n \)-crossing, where \( n \geq 0 \), then its contribution to \( A(x) \) is \( \frac{1}{1 - x} \).

\begin{figure}[h]
 \centering
    \begin{tikzpicture}[scale = 0.4] 
        \draw [dashed](-0.6,0) -- (16,0);
        \draw (16,0) -- (18.6,0);
      
        \foreach \x in {0,3,9,10,11,12,14,15,16,17,18}
        {
            \filldraw (\x,0) circle (2pt);
        }

\draw[domain=0:90, smooth, variable=\t, shift={(9,0)}] 
    plot ({cos(\t)}, {sin(\t)});
\draw[domain=0:90, smooth, variable=\t, shift={(11,0)}] 
    plot ({cos(\t)}, {sin(\t)});
    \draw[domain=0:90, smooth, variable=\t, shift={(14,0)}] 
    plot ({cos(\t)}, {sin(\t)});
        
        \draw[black] (0,0) arc (180:0:1.5 and 1.5);
        \draw[black] (9,0) arc (180:0:3.5 and 1.5);
        \draw[black] (11,0) arc (180:0:3 and 1.5);
        \draw[black] (14,0) arc (180:0:2 and 1.5);
     
        \node[anchor=north west] at (12.3, 2.8) {\footnotesize{$I'$}};
         \node[anchor=north west] at (5, 1.2) {\footnotesize{$\cdots$}};
     \node[anchor=north west] at (12, 1.2) {\footnotesize{$\cdots$}};
        \node[anchor=north west] at (-0.6, -0.1) {\scriptsize{1}};
        \node[anchor=north west] at (2.4, -0.1) {\scriptsize{$b_1$}};
        \node[anchor=north west] at (8.4, -0.1) {\scriptsize{$x_1$}};
        \node[anchor=north west] at (10.4, -0.1) {\scriptsize{$x_2$}};
        \node[anchor=north west] at (13.3, -0.1) {\scriptsize{$a_n$}};
        \node[anchor=north west] at (15.4, -0.1) {\scriptsize{$y_1$}};
        \node[anchor=north west] at (16.4, -0.1) {\scriptsize{$y_2$}};
        \node[anchor=north west] at (17.4, -0.1) {\scriptsize{$2_n$}};

    \end{tikzpicture}
\caption{The structure of $(P_4, P_5)$-avoiding Stoimenow matchings.}\label{fig-I_i}
\end{figure}
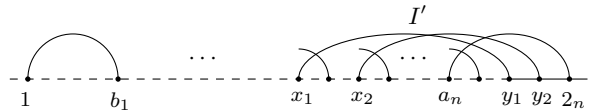

If \( M \) is not an \( n \)-crossing matching, the block \( I' \) consists of arcs $[x_1, y_1], \ldots,$  $[x_k, y_k] = [a_n, 2n]$ forming a maximal \( k \)-crossing (\( k \geq 1 \)), along with (i) additional arcs (if any) of the form \([a, b]\) satisfying \( x_i < a < b < x_{i+1} \) for \( 1 \leq i \leq k - 1 \), and (ii) closers within intervals \((x_i, x_{i+1})\) (\( 1 \leq i \leq k - 1 \)) and \((a_n, y_1)\), as shown in Figure~\ref{fig-I_i}, while \( I'' \) denotes the remaining arcs. We have the following observations.
\begin{itemize}
    \item We have $\emptyset \neq I'' \in \mathcal{M}(\tikz[scale = 0.25]{
    \draw (-0.4,0) -- (5.4,0);
    \foreach \x in {0,1,2,3,4,5}
    {
        \filldraw (\x,0) circle (2pt);
    }
    \draw[black] (0,0) arc (180:0:1 and 1);
    \draw[black] (1,0) arc (180:0:1.5 and 1);
    \draw[black] (3,0) arc (180:0:1 and 1);
})$ to avoid the pattern $P_5$, and the g.f.\ for $I''$ is $\frac{x}{1-2x}$ by Theorem~\ref{thm-conj-case-k=3}.
\item For arcs and closers in $I'$, there are possibly arcs inside the intervals $(x_i, x_{i+1})$, $i \in \{1, \dots, k-1\}$. To avoid the patterns $P_4$ and  $P_5$, each interval, along with the arcs with openers inside the arc $[1, b_1]$, induce a matching in $\mathcal{M}(\tikz[scale = 0.25]{
    \draw (-0.4,0) -- (5.4,0);
    \foreach \x in {0,1,2,3,4,5}
    {
        \filldraw (\x,0) circle (2pt);
    }
    \draw[black] (0,0) arc (180:0:1 and 1);
    \draw[black] (1,0) arc (180:0:1.5 and 1);
    \draw[black] (3,0) arc (180:0:1 and 1);
})$. Thus, the number of ways to generate each interval $(x_i, x_{i+1})$ is given by $\frac{1-x}{1-2x}$, which also allows such a interval to be empty. Moreover, in the interval $(x_k,y_1)$, there can only possibly be closers of arcs with openers inside $[1, b_1]$, which gives $\frac{1}{1-x}$. Therefore,  the contribution of $I'$ to $A(x)$ is 
\begin{small}
\begin{align*}
\frac{1}{1-x} \sum_{k \geq 1} x^k \cdot \left(1 + \frac{x}{1-2x}\right)^{k-1} = \frac{x}{(1-x)\left(1 - x\left(1 + \frac{x}{1-2x}\right)\right)}.
\end{align*}
\end{small}
\end{itemize}
Thus, we obtain
\[
A(x)=\frac{1}{1-x}+\frac{x}{1-2x}\cdot\frac{x}{(1-x)(1-x(1+\frac{x}{1-2x}))},
\]
which coincides with~\eqref{A(x)-P_1-P_4} after simplifications. 

Furthermore, the g.f.\ $A(x)$ satisfies the recurrence relation
\begin{align*}
    A(x) - x - 1 = 3x\,\left(A(x) - 1 \right) - x^2A(x).
\end{align*}
By examining the coefficients, we obtain
\begin{align*}
    a_n = 3a_{n-1} - a_{n-2},
\end{align*}
with $a_0 = a_1 = 1$, where $a_n = |\mathcal{M}_n(P_2, P_4)| = |\mathcal{M}_n(P_2, P_5)| = |\mathcal{M}_n(P_3, P_4)| = |\mathcal{M}_n(P_3, P_5)| = |\mathcal{M}_n(P_4, P_5)|$.
Since the Fibonacci number $F_n$ satisfies $F_n = F_{n-1} + F_{n-2}$ with $F_0 = F_1 = 1$, we verify that \( a_n = F_{2n-1} \). This completes the proof.\end{proof}

\section{Avoidance of triples of patterns}\label{triples-sec}

\subsection{Triples $(P_1, P_2, P_3)$}

\begin{thm}\label{thm-P1-P2-P3}
     The g.f.\ for $\M(P_1, P_2, P_3)$ is given by 
      \begin{align}\label{A(x)-P1-P2-P3}
          A(x)=\frac{1-4x+7x^2-5x^3+2x^4-x^5+x^6}{(1-x)^5}.
     \end{align}
     The corresponding sequence is {\rm A116722} in {\rm \cite{OEIS}}.
\end{thm}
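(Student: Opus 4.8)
The plan is to refine the four‑case analysis from the proof of Theorem~\ref{thm-P1-P3}, now additionally forbidding $P_2$, and to track how this extra restriction changes the generating‑function contribution of each case. As in that proof, for $M\in\M_n(P_1,P_2,P_3)$ I split according to the number of arcs lying strictly between $b_1$ and $a_n$: (1)~$M$ is an $n$‑crossing; (2)~there is no such arc, so every opener is at most $b_1$ and every closer is at least $a_n$; (3)~there is exactly one such arc $[x,y]$; (4)~there are at least two such arcs. Recall that $P_2$, read as a matching, is a path $a$--$b$--$c$--$d$ in which $a$ crosses $b$, $b$ crosses $c$, $c$ crosses $d$, while $a,c$, $a,d$ and $b,d$ are pairwise sequential (one arc entirely to the left of the other). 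In Case~1 an $n$‑crossing cannot contain $P_2$, since $P_2$ is not a sub‑pattern of a crossing, so the contribution is unchanged, namely $\frac{1}{1-x}$ (which also records the empty matching).

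In Cases~2 and~3 the effect of $P_2$ is local. In the $(P_1,P_3)$‑structure of Case~2 (Figure~\ref{case2-P3}) the only way to realize $P_2$ is as a path $[1,b_1]$, $\gamma$, $\delta$, $[a_n,2n]$, where $\gamma$ has its opener under $[1,b_1]$ and its closer strictly between $b_1$ and $a_n$, $\delta$ has its opener strictly between $b_1$ and $a_n$ and its closer under $[a_n,2n]$, and $\gamma$ crosses $\delta$; no occurrence of $P_2$ can involve the spanning arcs or the crossing families on $[1,b_1]$ and $[a_n,2n]$, since within any such family all arcs pairwise cross and the case hypothesis on openers and closers rules out the remaining possibilities. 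Forbidding the path above forces all of the ``left‑returning'' closers to precede all of the ``right‑going'' openers in the region between $b_1$ and $a_n$, so the shuffle factor $\frac{1}{1-2x}$ is replaced by $\frac{1}{(1-x)^2}$ (choosing where the closers stop and the openers start), and Case~2 contributes $\frac{x^2}{(1-x)^3}$. In Case~3 the analogous path $[1,b_1]$, $\gamma$, $[x,y]$, $\delta$ — with $\gamma$ returning under $[1,b_1]$, $\delta$ going under $[a_n,2n]$, both nested in $[x,y]$ — is an occurrence of $P_2$ that survives any reordering, so $[x,y]$ may cover left‑returning arcs or right‑going arcs but not both. Hence the factor $\frac{x}{1-2x}$ attached to $[x,y]$ in Theorem~\ref{thm-P1-P3} becomes $x\bigl(\frac{1}{1-x}+\frac{1}{1-x}-1\bigr)=\frac{x(1+x)}{1-x}$, and, keeping the two outer crossing families $\bigl(\frac{x}{1-x}\bigr)^2$, Case~3 contributes $\bigl(\frac{x}{1-x}\bigr)^2\cdot\frac{x(1+x)}{1-x}=\frac{x^3(1+x)}{(1-x)^3}$.

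In Case~4 the $(P_1,P_3)$‑structure is already $P_2$‑free: it has exactly two ``mixed'' intervals (those denoted $X_1$ and $X_k$ in Figure~\ref{case3-P3}), and these lie beneath different arcs, so no single arc covers both a left‑returning and a right‑going arc; moreover $A$, $E$ and $X_2,\ldots,X_{k-1}$ contain no arcs and the $Y_i$'s are empty, which rules out every other potential path. Hence this case still contributes $\frac{x^4}{(1-x)^5}$. Adding the four contributions gives
\[
A(x)=\frac{1}{1-x}+\frac{x^2}{(1-x)^3}+\frac{x^3(1+x)}{(1-x)^3}+\frac{x^4}{(1-x)^5},
\]
and putting everything over $(1-x)^5$ yields~\eqref{A(x)-P1-P2-P3}; comparing power series then identifies the sequence as {\rm A116722}.

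I expect the main obstacle to be the completeness claims: verifying that in Cases~2 and~3 the single local obstruction identified above is the only one — so that no residual occurrence of $P_2$ can involve the spanning arcs, the two large crossing families, or arcs nested deep inside $[1,b_1]$ or $[a_n,2n]$ — and that the rigid Case~4 configuration never realizes the shape of $P_2$ (a path $a$--$b$--$c$--$d$ of three crossings with $a,c$, $a,d$ and $b,d$ pairwise sequential). A minor but fiddly preliminary is re‑extracting the Case~4 skeleton from the $P_3$‑figures before layering on the $P_1$‑ and $P_2$‑constraints; once the skeletons are fixed, the generating‑function bookkeeping is routine.
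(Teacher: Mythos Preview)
Your four-case refinement of the $(P_1,P_3)$ analysis is exactly the paper's proof, with the same contributions in each case (the paper writes Case~3 as $x\bigl(\tfrac{x}{1-x}\bigr)^2\bigl(1+\tfrac{2x}{1-x}\bigr)$, which equals your $\tfrac{x^3(1+x)}{(1-x)^3}$), and your verifications for Cases~2 and~4 are correct.

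One small correction in Case~3: the quadruple $[1,b_1],\gamma,[x,y],\delta$ is an occurrence of $P_2$ only when $\gamma$'s closer precedes $\delta$'s opener inside $(x,y)$; if instead $\delta$'s opener comes first, then $\gamma$ and $\delta$ cross and that quadruple is not a path. In that ordering, take $[1,b_1],\gamma,\delta,[a_n,2n]$ instead --- this \emph{is} a $P_2$ --- so your conclusion that $(x,y)$ cannot contain both a left-returning closer and a right-going opener stands, and the contribution $\tfrac{x^3(1+x)}{(1-x)^3}$ is correct.
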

\begin{proof}
Suppose \( M \in \mathcal{M}_n(P_1, P_2, P_3) \). We consider four cases, where Case~1 and Case~4 are the same as in the analysis of $\mathcal{M}_n(P_1,P_3)$ in Section~\ref{subsec-P1-P3}, contributing to $\frac{1}{1-x}$ and $\frac{x^4}{(1-x)^5}$, respectively, and the remaining two cases are as follows:
\begin{itemize}
    \item[(i)] In Case 2, if there are no arcs between \( b_1 \) and \( a_n \), then the closers and openers between \( b_1 \) and \( a_n \) cannot be shuffled arbitrarily without creating the pattern \( P_2 \). In this case, the contribution is counted by $\frac{1}{1-x} \left( \frac{x}{1-x} \right)^2 = \frac{x^2}{(1-x)^3}$.
    
    \item[(ii)] In Case 3, if there is only one arc, \([x, y]\), between \( b_1 \) and \( a_n \), then only closers or openers of arcs, whose corresponding openers or closers lie inside the intervals \([1, b_1]\) or \([a_n, 2n]\), may appear in the interval \( (x, y) \). This contributes $x \left( \frac{x}{1-x} \right)^2 \left( 1 + \frac{2x}{1-x} \right)$ in total, where the factor \( \left( \frac{x}{1-x} \right)^2 \) has the same explanation as in our analysis of \( \mathcal{M}_n(P_1, P_3) \).
\end{itemize}

Thus, by incorporating all these differences into \eqref{A(x)-P1-P3}, we obtain
\[
A(x) = \frac{1}{1-x} + \frac{x^2}{(1-x)^3} + x \left( \frac{x}{1-x} \right)^2 \left( 1 + \frac{2x}{1-x} \right) + \frac{x^4}{(1-x)^5},
\]
which coincides with \eqref{A(x)-P1-P2-P3}.\end{proof}

 \subsection{Triples $(P_1, P_2, P_4)$, $(P_1, P_2, P_5)$, $(P_2, P_3, P_4)$, $(P_2, P_3, P_5)$, $(P_1, P_4, P_5)$ and $(P_2, P_4, P_5)$}\label{subsec-triple-A000325} 
\begin{thm}\label{thm-A000325}
     The g.f.\ for $\M(P_1, P_2, P_4)$, $\M(P_1, P_2, P_5)$, $\M(P_2, P_3, P_4)$, $\M(P_2, P_3, P_5)$, \\ $\M(P_1, P_4, P_5)$ and $\M(P_2, P_4, P_5)$ is given by 
      \begin{align}\label{A(x)-A000325}
          A(x)=\frac{1-3x+3x^2}{(1-x)^2(1-2x)}.
     \end{align}
     The corresponding sequence is {\rm A000325} in {\rm \cite{OEIS}}.
\end{thm}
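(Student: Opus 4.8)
The plan is to reduce the six triples to four representatives using the reversal involution on matchings (reading the points $2n,2n-1,\ldots,1$ from left to right), which, as exploited repeatedly above, fixes $P_1$, $P_2$, $P_3$ and interchanges $P_4$ and $P_5$. Consequently $|\M_n(P_1,P_2,P_4)| = |\M_n(P_1,P_2,P_5)|$ and $|\M_n(P_2,P_3,P_4)| = |\M_n(P_2,P_3,P_5)|$, while $(P_1,P_4,P_5)$ and $(P_2,P_4,P_5)$ are reversal-invariant; hence it suffices to compute $A(x)$ for the four sets $(P_1,P_2,P_4)$, $(P_2,P_3,P_4)$, $(P_1,P_4,P_5)$, $(P_2,P_4,P_5)$ and check that each yields \eqref{A(x)-A000325}.

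For each representative I would re-use the structural decomposition already developed for the corresponding pair or triple, adding the single extra forbidden pattern as a further constraint and tracking how it trims the generating functions of the free sub-intervals. For $(P_1,P_2,P_4)$: start from the four-case split of $\M(P_1,P_2,P_3)$ in the proof of Theorem~\ref{thm-P1-P2-P3} (cases determined by whether $[1,b_1]$ and $[a_n,2n]$ overlap and by the number of arcs strictly between $b_1$ and $a_n$) and replace each surviving sub-interval factor by its $P_4$-restricted analogue: the nested/crossing sub-blocks collapse, so factors such as $\frac{1}{1-2x}$ or $\frac{x}{1-2x}$ become $\frac{1}{1-x}$ or $\frac{x}{1-x}$ and some become constants. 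For $(P_2,P_3,P_4)$: start from the $\M(P_3,P_4)$ analysis (the $P_3$-framework of Section~\ref{subsec-P3-avoid} with $H(x)$ replaced by $\frac{x}{1-x}$, cf. the proof of Theorem~\ref{thm-A001519}); forbidding $P_2$ destroys the chains of crossings producing the factors $\frac{1}{1-2x}$ and $1+\frac{x^2}{1-2x}$ that occur in \eqref{F(x)-P3-formula} and Figure~\ref{case3-P3}, which are then replaced by their $P_2$-free versions, and substituting into \eqref{F(x)-P3-formula} yields a rational $A(x)$ directly; alternatively one could use the reduction-arc / irreducible-block decomposition as in the $\M(P_2,P_3)$ proof. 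For $(P_1,P_4,P_5)$ and $(P_2,P_4,P_5)$: start from the decomposition of $\M(P_4,P_5)$ in Figure~\ref{fig-I_i} into the maximal crossing block $I'$ ending with $[a_n,2n]$ and the remainder $I''$; forbidding $P_1$ forces the intervals between consecutive openers of $I'$ to be empty and $I''$ to avoid $P^3_3$ with no interior arcs, while forbidding $P_2$ forces $I''\in\M(P^3_3,P_2)$ and restricts the interior of $I'$; in either case one reads off a closed form for $A(x)$ directly, as in the $\M(P_4,P_5)$ argument, or solves a linear functional equation $A(x) = (\text{polynomial part}) + xA(x)(\cdots)$ when the first-irreducible-block decomposition is used.

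The final step is algebraic: simplify each of the four resulting generating functions and verify that they all equal $\frac{1-3x+3x^2}{(1-x)^2(1-2x)}$; equivalently, that $|\M_n(S)| = 2^n - n$ for $n\geq 0$, which one may cross-check against the recurrence
\[
a_n = 4a_{n-1} - 5a_{n-2} + 2a_{n-3} \quad (n \geq 3), \qquad a_0 = a_1 = 1,\ a_2 = 2,
\]
since $(1-x)^2(1-2x) = 1-4x+5x^2-2x^3$. Matching the values $1,2,5,12,27,58,121,248,\dots$ identifies the sequence as A000325.

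The main obstacle is the second step carried out for each of the four representatives: one must pin down, for every sub-interval in the relevant structural picture, exactly which openers and closers are allowed there once the extra pattern is forbidden, and verify both that the remaining free shuffles cannot re-create any of $P_1$--$P_5$ and that the decomposition is bijective (nothing counted twice, nothing omitted). Keeping the bookkeeping of the surviving shuffle factors exactly right — and then confirming that the four a priori different functional equations really do collapse to the single generating function in \eqref{A(x)-A000325} — is where the real work lies; the rest is routine simplification.
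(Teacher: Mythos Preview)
Your overall strategy---reduce via reversal to the four representatives $(P_1,P_2,P_4)$, $(P_2,P_3,P_4)$, $(P_1,P_4,P_5)$, $(P_2,P_4,P_5)$ and then refine an earlier structural decomposition for each---is exactly what the paper does. But what you have submitted is a plan, not a proof: none of the four computations is carried out, and you yourself identify the bookkeeping of the surviving shuffle factors as ``where the real work lies''. That work is the proof.

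More specifically, your proposed starting point for $(P_1,P_2,P_4)$ will not work as stated. You want to begin from the four-case split of $\M(P_1,P_2,P_3)$ in Theorem~\ref{thm-P1-P2-P3} and ``replace each surviving sub-interval factor by its $P_4$-restricted analogue''. But that split, and the interval structure of Figure~\ref{case3-P3} behind it, rests on $P_3$-avoidance: it is precisely the ban on $P_3$ that forces everything between $b_1$ and $a_n$ into the chain $B\,C\,Y_1X_1\cdots Y_kX_k\,D$. The triple $(P_1,P_2,P_4)$ does \emph{not} forbid $P_3$, so that picture is unavailable and there is no simple substitution to make. The paper instead starts from the $(P_1,P_4)$ structure of Figure~\ref{fig-P_1-P_4} (blocks $I_1,I_2,\ldots$ with $I_j$ an $|I_j|$-crossing for $j\ge 2$) and observes that adding $P_2$ forces every interval $(x_i,x_{i+1})$ to be empty and the closers from under $[1,b_1]$ to appear only in $(x_k,y_1)$, which immediately gives
\[
A(x)=\frac{1}{1-x}+\frac{x^2}{(1-x)^3\bigl(1-\tfrac{x}{1-x}\bigr)}.
\]
Your suggested starting points for the other three representatives are workable and close to the paper's (the paper uses Figure~\ref{fig-P_1-P_4} rather than Figure~\ref{fig-I_i} for $(P_1,P_4,P_5)$, and a fresh block picture for $(P_2,P_4,P_5)$), but in each case the actual derivation still needs to be written down.
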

\begin{proof}
\textbf{Triples $(P_1, P_2, P_4)$ and $(P_1, P_2, P_5)$.}
Because of the reverse operation, we only need to consider \( \mathcal{M}(P_1, P_2, P_4) \). Given a matching \( M \in \mathcal{M}_n(P_1, P_2, P_4) \), the only difference from our analysis of \( \mathcal{M}(P_1, P_4) \) in Theorem~\ref{thm-A005183} is that, in the second case, the closers with their corresponding openers lying under the arc \([1, b_1]\) can appear only within the interval \([x_k, y_1]\).  Additionally, the interval $(x_k, y_1)$ and any other interval $(x_i, x_{i+1})$ for $i \in \{1, \ldots, k-1\}$ must be empty to avoid the pattern $P_2$. Thus, the contribution of this case replaces $\sum_{j\geq 0} \left(\frac{x}{1-x}\right)^j$ with $\frac{1}{1-x}$ in~\eqref{aux-eq-1}, yielding
\[
\frac{x}{1-x}\, \frac{1}{1-x} \sum_{k\geq 1}x^k\left(\frac{1}{1-x}\right)^k= \frac{x^2}{(1-x)^3\left(1-\frac{x}{1-x}\right)}.
\]
 Hence, we obtain
    \[
    A(x)=\frac{1}{1-x}+\frac{x^2}{(1-x)^3 \left(1-\frac{x}{1-x}\right)},
    \]
    which coinsides with~\eqref{A(x)-A000325}.\\[-3mm]

\noindent
 \textbf{Triples $(P_2, P_3, P_4)$ and $(P_2, P_3, P_5)$.}
Because of the reverse operation, we only need to consider $\mathcal{M}(P_2, P_3, P_4)$. Given $M\in \mathcal{M}_n(P_2, P_3, P_4)$, the differences from our analysis of $\mathcal{M}_n(P_3)$ in Section~\ref{subsec-P3-avoid} are that $(i)$ in Case 2, using Figure~\ref{case2-P3}, all the closers must be to the left of all openers between $b_1$ and $a_n$. Thus, such matchings in this case are counted by $\frac{1}{1-x}\left(\frac{x}{1-x}\right)^2$; $(ii)$ in Case 3, $C, X_2, \ldots, X_k$ can only have closers with their corresponding openers inside the arc $[1,b_1]$. Thus, $H(x)$ in~\eqref{F(x)-P3-formula} is replaced by $\frac{x}{1-x}$. Moreover, the considerations for \( B \) and \( D \) are the same as in case~$(i)$, and \( Y_i = \emptyset \) for \( 1 \leq i \leq k \); thus, \( x^2 \left(1 + \frac{x}{1 - 2x} \right)^2 \) is replaced by \( \left( \frac{x}{1 - x} \right)^2 \), and \( \left( 1 + \frac{x^2}{1 - 2x} \right) \) is replaced by 1. Therefore, substituting all the differences into~\eqref{F(x)-P3-formula}, we obtain 
\begin{align}\label{P2-P3-P4-formula}
    A(x)=\frac{1}{1-x}\left(1+\left(\frac{x}{1-x}\right)^2+\left(\frac{x}{1-x}\right)^2\frac{\frac{x}{1-x}}{1-\frac{x}{1-x}}\right).
\end{align}
Simplifying this expression yields~\eqref{A(x)-A000325}.\\[-3mm]

\noindent
 \textbf{Triple $(P_1, P_4, P_5)$.} Given $M\in \M_n(P_1, P_4, P_5)$. If $M$ is an $n$-crossing, then it contributes $\frac{1}{1-x}$ to $A(x)$, which also accounts for the empty matching. If $M$ is not an $n$-crossing, using Figure~\ref{fig-P_1-P_4}, we have the following two cases. 
 \begin{itemize}
 \item If \( I_1 \) is the only block (i.e., there is no \( I_2 \)), then each interval \( (x_i, x_{i+1}) \) or \( (x_k, y_1) \) may contain closers of arcs whose openers lie within \( [1, b_1] \). Thus, this case contributes $\frac{x}{1 - x} \sum_{k \geq 1} x^k \left( \frac{1}{1 - x} \right)^k = \frac{x^2}{(1-x)^2 \left(1 - \frac{x}{1 - x}\right)}$.
 \item If there are at least two such intervals \( I_j \)'s, then each \( I_j \) must be an \( |I_j| \)-crossing, which contributes \( \frac{x}{1 - x} \). Thus, all such \( I_j \)'s together with the first irreducible block contribute $\frac{x}{1 - x} \sum_{j \geq 2} \left( \frac{x}{1 - x} \right)^j = \frac{x^3}{(1-x)^3 \left(1 - \frac{x}{1 - x}\right)}$. \end{itemize}
 Therefore, we obtain 
 \begin{align*}
      A(x)=\frac{1}{1-x}+\frac{x^2}{(1-x)^2 \left(1 - \frac{x}{1 -x}\right)}+\frac{x^3}{(1-x)^3 \left(1 - \frac{x}{1 - x}\right)},
 \end{align*}
 which coincides with~\eqref{A(x)-A000325}.\\[-3mm]

\begin{figure}[t]
 \centering
    \begin{tikzpicture}[scale = 0.4] 
        \draw [dashed](-0.6,0) -- (6,0);
        \draw (6,0) -- (7,0);
        \draw [dashed](7,0) -- (9,0);
        \draw (9,0) -- (14,0);
        \draw [dashed](14,0) -- (16,0);
        \draw (16,0) -- (19,0);
        \draw [dashed](19,0) -- (21,0);
        \draw (21,0) -- (22,0);
        \draw [dashed](22,0) -- (26,0);
        \draw (26,0) -- (28.6,0);
        \foreach \x in {0,2,4,6,7,9,10,11,12,13,14,16,17,18,19,21,22,24,25,26,27,28}
        {
            \filldraw (\x,0) circle (2pt);
        }

        \draw[black] (0,0) arc (180:0:2 and 1.5);
        \draw[black] (6,0) arc (180:0:2 and 1.5);
        \draw[black] (7,0) arc (180:0:2 and 1.5);
        \draw[black] (9,0) arc (180:0:1.5 and 1.5);
        \draw[black] (13,0) arc (180:0:2 and 1.5);
        \draw[black] (14,0) arc (180:0:2 and 1.5);
        \draw[black] (16,0) arc (180:0:1.5 and 1.5);
        \draw[black] (21,0) arc (180:0:2.5 and 1.5);
        \draw[black] (22,0) arc (180:0:2.5 and 1.5);
        \draw[black] (24,0) arc (180:0:2 and 1.5);
        \draw[black,gray=0.1] (2,0) arc (180:0:11.5 and 2);
        \node[anchor=north west] at (8, -0.1) {\footnotesize{$I_1$}};
        \node[anchor=north west] at (14.8, -0.1) {\footnotesize{$I_2$}};
        \node[anchor=north west] at (24.2, 2.8) {\footnotesize{$I'$}};
          \node[anchor=north west] at (4.5, 1.2) {\footnotesize{$\cdots$}};
     \node[anchor=north west] at (7.5, 1.2) {\footnotesize{$\cdots$}};
     \node[anchor=north west] at (14.5, 1.2) {\footnotesize{$\cdots$}};
      \node[anchor=north west] at (19, 1.2) {\footnotesize{$\cdots$}};
     \node[anchor=north west] at (22.5, 1.2) {\footnotesize{$\cdots$}};
        \node[anchor=north west] at (-0.6, -0.1) {\scriptsize{1}};
        \node[anchor=north west] at (3.4, -0.1) {\scriptsize{$b_1$}};
        \node[anchor=north west] at (20.4, -0.1) {\scriptsize{$x_1$}};
        \node[anchor=north west] at (21.4, -0.1) {\scriptsize{$x_2$}};
        \node[anchor=north west] at (23.3, -0.1) {\scriptsize{$a_n$}};
        \node[anchor=north west] at (25.4, -0.1) {\scriptsize{$y_1$}};
        \node[anchor=north west] at (26.4, -0.1) {\scriptsize{$y_2$}};
        \node[anchor=north west] at (27.4, -0.1) {\scriptsize{$2_n$}};

    \end{tikzpicture}
\caption{The structure of $(P_2, P_4, P_5)$-avoiding Stoimenow matchings.}\label{fig-P_2-P_4_P_5}
\end{figure}
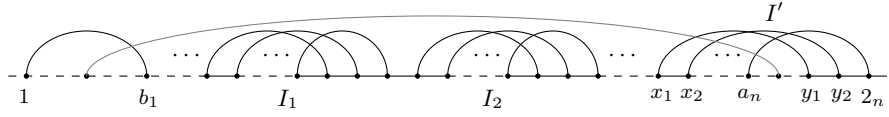

 \noindent
 \textbf{Triple $(P_2, P_4, P_5)$.} Given \( M \in \mathcal{M}_n(P_2, P_4, P_5) \), if \( M \) is an \( n \)-crossing, then it contributes \( \frac{1}{1 - x} \) to $A(x)$, which also accounts for the empty matching.If \( M \) is not an \( n \)-crossing, let \( I_i \) (\( i = 1, 2, \ldots \)) be the blocks excluding the rightmost one, defined as in the analysis of \( \mathcal{M}_n(P_1, P_4) \) in Section~\ref{subsec-P1-P4}, and let \( I' \) be the rightmost block. As shown in Figure~\ref{fig-P_2-P_4_P_5}, each \( I_j \) in \(\{ I_1, I_2, \ldots \}\) forms an \( |I_j| \)-crossing, counted by \( \sum_{j \geq 0} \left( \frac{x}{1 - x} \right)^j = \frac{1}{1 - \frac{x}{1 - x}} \), while \( I' \) contains only closers of arcs in \( (a_n, 2n) \) whose openers lie in \( [1, b_1] \) (to avoid \( P_2 \)), with this configuration counted by \( \frac{1}{1 - x} \cdot \frac{x}{1 - x} \).
 
Thus, this case, together with the crossing formed by \( [1, b_1], [2, b_1 + 1], \ldots, [m, b_1 + m - 1] \), where \( m \) is maximal, contributes
$\frac{x}{1 - x} \cdot \frac{1}{1 - x} \cdot \frac{x}{1 - x} \cdot \frac{1}{1 - \frac{x}{1 - x}} = \frac{x^2}{(1 - x)^3 \left(1 - \frac{x}{1 - x} \right)}$
to \( A(x) \). Hence, we obtain 
\begin{equation*}
    A(x) = \frac{1}{1-x} + \frac{x^2}{(1-x)^3(1-\frac{x}{1-x})},
\end{equation*}
which coincides with~\eqref{A(x)-A000325}.
\end{proof}

\subsection{Triples $(P_1, P_3, P_4)$ and $(P_1, P_3, P_5)$}

\begin{thm}\label{thm-A116725}
     The g.f.\ for $\M(P_1, P_3, P_4)$ and $\M(P_1, P_3, P_5)$ is given by 
      \begin{align}\label{A(x)-A116725}
          A(x)=\frac{1-5x+10x^2-9x^3+3x^4-x^5}{(1-x)^4(1-2x)}.
     \end{align}
     The corresponding sequence is {\rm A116725} in {\rm \cite{OEIS}}.
\end{thm}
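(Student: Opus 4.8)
The plan is to reduce to a single triple by the reversal symmetry and then to re-run the case analysis behind the $\M(P_1,P_3)$ count from Section~\ref{subsec-P1-P3} (which itself rests on the $\M(P_3)$ analysis of Section~\ref{subsec-P3-avoid}), now excluding $P_4$ at every step.

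First I would record the symmetry: the reversal $i\mapsto 2n+1-i$ of a matching on $\{1,\dots,2n\}$ fixes each of $P_1$ and $P_3$ and swaps $P_4$ with $P_5$, so $|\M_n(P_1,P_3,P_4)|=|\M_n(P_1,P_3,P_5)|$ for every $n$, exactly as in the proof of Theorem~\ref{thm-A005183}; hence it is enough to determine the g.f.\ of $\M(P_1,P_3,P_4)$.

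Next I would take $M\in\M_n(P_1,P_3,P_4)$ and split it according to the mutual position of $[1,b_1]$ and $[a_n,2n]$ and the number of arcs lying strictly between $b_1$ and $a_n$, i.e.\ into the four cases already used for $\M(P_1,P_3)$: (1) $M$ is an $n$-crossing; (2) $[1,b_1]$ and $[a_n,2n]$ are disjoint and no arc lies between $b_1$ and $a_n$ (Figure~\ref{case2-P3}); (3) exactly one arc $[x,y]$ lies between them (Figure~\ref{fig-pair-P1-P3}); (4) at least two arcs lie between them (Figure~\ref{case3-P3}). In each case the only new work is to decide which of the openers and closers that were freely shuffled in the $\M(P_1,P_3)$ description now get pinned down by $P_4$ --- recall that an occurrence of $P_4$ is a small ``left'' arc, a crossing pair to its right, and a further arc to the right of that pair --- and to replace the affected generating-function factor. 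Case~1 is clearly unchanged and contributes $\frac{1}{1-x}$ (also accounting for the empty matching). In Cases~2 and~3 the interiors of the relevant ``middle'' arcs, which previously hosted an arbitrary shuffle (the $\frac{1}{1-2x}$ factors in the $\M(P_1,P_3)$ count), are trimmed by $P_4$, and one rewrites those factors accordingly. In Case~4 one starts from Figure~\ref{case3-P3}, where $P_1$ already forces $Y_1=\dots=Y_k=\emptyset$ and limits $B,D,X_1,X_k$, and then imposes $P_4$, which, as in the $\M(P_3,P_4)$ analysis of Theorem~\ref{thm-A001519}, forbids arcs and ``wrong-side'' endpoints inside $C,X_1,\dots,X_k$, so that each such interval together with its defining arc contributes at most $\frac{x}{1-x}$ and the remaining neutral arcs a geometric factor; where a sub-block collapses to a pure crossing, Theorem~\ref{thm-conj-case-k=3} may be quoted. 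Summing the four contributions and simplifying yields~\eqref{A(x)-A116725}, and the sequence is then pinned to A116725 by checking initial terms.

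The hard part will be Case~4: one must verify that the simultaneous constraints of $P_1$, $P_3$ and $P_4$ on the intervals $A,B,C,X_1,\dots,X_k,Y_1,\dots,Y_k,D,E$ of Figure~\ref{case3-P3} leave exactly the freedom claimed --- in particular that no $P_4$ survives through a crossing involving $[1,b_1]$ or $[a_n,2n]$ --- and to get every exponent of $(1-x)$ in the resulting rational function right. It will also take some care to pin down precisely how much $P_4$ trims Cases~2 and~3, since the $\M(P_1,P_3)$ count for those cases is genuinely reduced; but once the admissible shuffles are identified those cases are routine, and the closing algebraic simplification is mechanical.
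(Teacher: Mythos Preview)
Your plan is the paper's approach: reduce to $(P_1,P_3,P_4)$ by reversal, then rerun the case analysis behind the $\M(P_3)$/$\M(P_1,P_3)$ count and locate where $P_4$ cuts down the options. The paper does it slightly more economically by working off the three-case decomposition of Section~\ref{subsec-P3-avoid} rather than the four-case refinement of Section~\ref{subsec-P1-P3}, but that is cosmetic.

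There is one concrete misstep, though. Case~2 is \emph{not} trimmed by $P_4$: the factor $\frac{x^2}{(1-x)(1-2x)}$ survives unchanged. In the structure of Figure~\ref{case2-P3}, every arc opening to the right of $b_1$ closes to the right of $a_n$; so if some arc plays the role of the second arc of $P_4$ (opener $p_2>q_1\ge b_1$), its closer $q_2$ exceeds $a_n$, forcing the fourth opener $p_4>q_2>a_n$, which is impossible since $a_n$ is the last opener. If you actually replace the $\frac{1}{1-2x}$ in Case~2 you will not land on~\eqref{A(x)-A116725}. Your instinct about Case~3 (one inner arc $[x,y]$) is correct: a $\beta$ under $[x,y]$, together with $[1,b_1]$, $[x,y]$ and $[a_n,2n]$, realises $P_4$, so that $\frac{x}{1-2x}$ does drop to $\frac{x}{1-x}$. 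In the paper's packaging your Cases~3 and~4 are handled jointly (Case~3 of Section~\ref{subsec-P3-avoid}): $P_1$ empties all $Y_i$ and the bridging arcs in $A$, while $P_4$ forces each $X_i$ to be a bare arc and leaves $C$ with $\alpha$'s only; the whole block collapses to $\frac{x^3}{(1-x)^4}$, giving
\[
A(x)=\frac{1}{1-x}\Bigl(1+\frac{x^2}{1-2x}+\frac{x^3}{(1-x)^3}\Bigr),
\]
which simplifies to~\eqref{A(x)-A116725}.
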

\begin{proof}
Because of the reverse operation, we only need to consider \( \mathcal{M}(P_1, P_3, P_4) \). Let \( M \in \mathcal{M}_n(P_1, P_3, P_4) \). The only differences from our analysis of \( \mathcal{M}_n(P_3) \) in Section~\ref{subsec-P3-avoid} pertain to Case~3 and are as follows; we refer the reader to Figure~\ref{case3-P3} for the notation used.

Under the interval \( C \), there can only be closers of arcs whose openers are in \( [1, b_1] \). Thus, all arcs related to the interval \( C \) are counted by \( \frac{x}{1 - x} \). Also, each interval among \( X_1, \ldots, X_k \) (with \( k \geq 0 \)) consists of a single arc, so all such intervals are counted by \( \frac{1}{1 - x} \). Moreover, the interval \( B \) (resp., \( D \)) can only contain closers (resp., openers) of arcs whose openers (resp., closers) lie inside the arc \( [1, b_1] \) (resp., \( [a_n, 2n] \)) in order to avoid the pattern \( P_1 \). Together with the arcs \( [1, b_1] \) and \( [a_n, 2n] \), this contributes \( \left( \frac{x}{1 - x} \right)^2 \). Therefore, this case contributes
\[
\left( \frac{x}{1 - x} \right)^2 \cdot \frac{x}{1 - x} \cdot \frac{1}{1 - x} = \frac{x^3}{(1 - x)^4}
\]
to \( A(x) \). By replacing the term corresponding to Case~3 in~\eqref{F(x)-P3-formula} with \( \frac{x^3}{(1 - x)^4} \), we obtain
\[
A(x) = \frac{1}{1 - x} \left( 1 + \frac{x^2}{1 - 2x} + \frac{x^3}{(1 - x)^3} \right),
\]
which coincides with~\eqref{A(x)-A116725}. The proof is complete.
\end{proof}

\subsection{Triples $(P_3, P_4, P_5)$}

\begin{thm}\label{thm-P3-P4-P5}
     The g.f.\ for $\M(P_3, P_4, P_5)$ is given by 
       \begin{align}\label{A(x)-P3-P4-P5}
           A(x)=\frac{(1-x)^2}{1-3x+2x^2-x^3}.
      \end{align}
      The corresponding sequence is {\rm A034943} in {\rm \cite{OEIS}}.
\end{thm}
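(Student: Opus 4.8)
The plan is to re-run the three-case decomposition of $P_3$-avoiding Stoimenow matchings from the proof of Theorem~\ref{MnP3Cn} in Section~\ref{subsec-P3-avoid}, now keeping track of the extra constraints coming from forbidding $P_4$ and $P_5$ as well. For $M\in\M_n(P_3,P_4,P_5)$, written as $M=\{[1,b_1],\dots,[a_n,2n]\}$, split into: Case~1, where $[1,b_1]$ and $[a_n,2n]$ overlap (so $M$ is an $n$-crossing); Case~2, where they do not overlap and no arc lies strictly between $b_1$ and $a_n$; Case~3, where they do not overlap and at least one arc lies strictly between $b_1$ and $a_n$. In Cases~1 and~2 every two arcs of $M$ either cross or nest, so no arc is entirely to the left or right of another and none of $P_3,P_4,P_5$ can occur; hence these cases are unchanged from Section~\ref{subsec-P3-avoid} and contribute $\frac{1}{1-x}\!\left(1+\frac{x^2}{1-2x}\right)=\frac{1-x}{1-2x}$.

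The work is in Case~3, analysed via the skeleton of Figure~\ref{case3-P3}. I expect to show: the deep nested run of openers $(b_1-i)\cdots(b_1-1)$ with closers $(a_n+1)\cdots(a_n+i)$ is still forced to be an $i$-crossing and contributes $\frac{1}{1-x}$; the intervals $B$ and $D$ still contribute $1+\frac{x}{1-2x}$ each; each $Y_1,\dots,Y_k$ still contributes $1+\frac{x^2}{1-2x}$; but each interval $C,X_1,\dots,X_k$ lying \emph{under} an arc is now forced to be empty. For the last point: if such an interval, under an arc $[a',b']$ with $b_1<a'<b'<a_n$, contained the closer $\alpha$ of an arc $w$ opening under $[1,b_1]$, then $\{[1,b_1],w,[a',b'],[a_n,2n]\}$ would be an occurrence of $P_5$; symmetrically an opener $\beta$ of an arc closing under $[a_n,2n]$ would give $P_4$; and if the interval contained an arc with both endpoints inside $[a',b']$, then keeping $M$ Stoimenow would force somewhere in the induced chain of nestings either such an $\alpha$ or $\beta$ (just ruled out) or a crossing pair of arcs nested inside $[a',b']$, and that pair together with $[1,b_1]$ and $[a_n,2n]$ is an occurrence of $P_3$. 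Hence $C,X_1,\dots,X_k$ collapse to their defining arcs, i.e.\ the series $H(x)$ of Section~\ref{subsec-P3-avoid} is replaced by $x$.

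Substituting $H(x)=x$ into~\eqref{F(x)-P3-formula} gives
\[
A(x)=\frac{1}{1-x}\left(1+\frac{x^2}{1-2x}+x^2\left(1+\frac{x}{1-2x}\right)^{2}\frac{x}{1-\left(1+\frac{x^2}{1-2x}\right)x}\right),
\]
and simplifying (using $1+\frac{x^2}{1-2x}=\frac{(1-x)^2}{1-2x}$ and $1-3x+2x^2=(1-x)(1-2x)$) yields~\eqref{A(x)-P3-P4-P5}; equivalently $(1-3x+2x^2-x^3)A(x)=(1-x)^2$, so $a_n=3a_{n-1}-2a_{n-2}+a_{n-3}$ for $n\ge 3$ with $a_0=a_1=1$ and $a_2=2$, which identifies the sequence as {\rm A034943} in~\cite{OEIS}. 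The main obstacle is the Case~3 bookkeeping: one must check carefully that forbidding $P_4$ and $P_5$ empties exactly the under-arc intervals $C,X_1,\dots,X_k$ while $B$, $D$, the $Y_j$'s and the deep nested run keep their $P_3$-only descriptions — in particular that a nonempty $Y_j$, or a crossing pair among the arcs that span from $[1,b_1]$ or into $[a_n,2n]$, never creates any of $P_3,P_4,P_5$ (such a configuration does produce $P_2$, which is why the triple $(P_2,P_3,P_4)$ behaves differently, but $P_2$ is allowed here).
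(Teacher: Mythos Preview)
Your approach is exactly the paper's: rerun the three-case analysis of Section~\ref{subsec-P3-avoid} and observe that the only change is that $H(x)$ must be replaced by $x$ in~\eqref{F(x)-P3-formula}. Your Case~3 reasoning (that an $\alpha$ under one of the arcs $C,X_1,\dots,X_k$ forces $P_5$, a $\beta$ forces $P_4$, and hence by the Stoimenow conditions no nested arc can survive either) is correct and more detailed than what the paper writes; the substitution and simplification are also carried out correctly.

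There is, however, a flaw in your justification for Case~2. It is \emph{not} true that in Case~2 every two arcs of $M$ cross or nest: the arc $[1,b_1]$ lies entirely to the left of every $Z$-arc (openers of arcs closing under $[a_n,2n]$), every $Y$-arc lies entirely to the left of $[a_n,2n]$, and a $Y$-arc can lie entirely to the left of a $Z$-arc depending on the shuffle. What is true is that no $P_4$ or $P_5$ can be formed: in the Case~2 structure all $Y$-arcs together with $[1,b_1]$ form a single crossing, and all $Z$-arcs together with $[a_n,2n]$ form a single crossing, so whenever you pick a ``separated'' arc and three arcs to its right (for $P_4$) or left (for $P_5$), those three arcs are forced into a $3$-crossing rather than the chain shape $\{[1,3],[2,5],[4,6]\}$ required by $P_4$ or $P_5$. (A $Y$-arc, a crossing $Z$-arc, $[1,b_1]$ and $[a_n,2n]$ together do realise a forbidden-looking $4$-chain, but that chain is $P_2$, which you are not excluding.) So your conclusion for Case~2 is right, but the one-line reason you gave needs to be replaced by this structural check.
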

\begin{proof}
The difference between the structure of matchings in \( \mathcal{M}(P_3, P_4, P_5) \) and those in \( \mathcal{M}(P_3) \) (considered in Section~\ref{subsec-P3-avoid}) is that, in Case 3 of our analysis of $\mathcal{M}(P_3)$, there are no arcs with closers or openers in \( C, X_1, \ldots, X_k \); that is, \( C, X_1, \ldots, X_k \) are single arcs. Thus, by replacing \( H(x) \) with \( x \) in~\eqref{F(x)-P3-formula}, we obtain \( A(x) \).\end{proof}

\section{Avoidance of four patterns}\label{4-sec}

\subsection{Quadruples $(P_1, P_2, P_3, P_4)$, $(P_1, P_2, P_3, P_5)$ and $(P_2, P_3, P_4, P_5)$}

\begin{thm}\label{thm-A050407}
     The g.f.\ for $\M(P_1, P_2, P_3, P_4)$, $\M(P_1, P_2, P_3, P_5)$ and $\M(P_2, P_3, P_4, P_5)$ is given by 
      \begin{align}\label{A(x)-A050407}
          A(x)=\frac{1-3x+4x^2-x^3}{(1-x)^4}.
     \end{align}
     The corresponding sequence is {\rm A050407} in {\rm \cite{OEIS}}.
\end{thm}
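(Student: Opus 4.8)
The plan is to combine the reverse operation with refinements of case analyses already established for triples.

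First, reading a Stoimenow matching from right to left while interchanging openers and closers is an involution of $\mathcal{M}_n$ that fixes each of $P_1$, $P_2$, $P_3$ and interchanges $P_4$ with $P_5$. Hence $|\mathcal{M}_n(P_1,P_2,P_3,P_4)|=|\mathcal{M}_n(P_1,P_2,P_3,P_5)|$ for all $n$, so it suffices to compute the generating functions of $\mathcal{M}(P_1,P_2,P_3,P_4)$ and of $\mathcal{M}(P_2,P_3,P_4,P_5)$ and check that both equal the right-hand side of~(\ref{A(x)-A050407}); the case $(P_1,P_2,P_3,P_5)$ then follows.

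For $\mathcal{M}(P_1,P_2,P_3,P_4)$ I would start from the three-case analysis of $\mathcal{M}(P_1,P_3,P_4)$ in the proof of Theorem~\ref{thm-A116725} (Case~1: $n$-crossings, contribution $\frac{1}{1-x}$; Case~2: no arc strictly between $b_1$ and $a_n$, contribution $\frac{x^2}{(1-x)(1-2x)}$; Case~3: at least one such arc, contribution $\frac{x^3}{(1-x)^4}$) and impose avoidance of $P_2$ on top. The highly constrained structures of Cases~1 and~3 contain no occurrence of $P_2$ (in Case~3 one checks that no arc can serve as the leftmost arc of a length-four staircase, essentially because $a_n$ is the last opener and the intervals $B$, $D$ and the interior of the $C$-arc are already ``pure''), so only Case~2 changes: in the configuration of Figure~\ref{case2-P3} the closers and openers lying between $b_1$ and $a_n$ may no longer be shuffled as an arbitrary binary word, since an opener preceding a closer there yields, together with $[1,b_1]$ and $[a_n,2n]$, a chain of four pairwise overlapping arcs, i.e.\ an occurrence of $P_2$; thus all closers must precede all openers and the factor $\frac{1}{1-2x}$ is replaced by $\frac{1}{(1-x)^2}$. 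This gives
\[
A(x)=\frac{1}{1-x}+\frac{x^2}{(1-x)^3}+\frac{x^3}{(1-x)^4}=\frac{1-3x+4x^2-x^3}{(1-x)^4}.
\]

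For $\mathcal{M}(P_2,P_3,P_4,P_5)$ I would start from the proof of Theorem~\ref{thm-P3-P4-P5}, i.e.\ from~(\ref{F(x)-P3-formula}) with $H(x)$ replaced by $x$, and again impose avoidance of $P_2$. The new restriction affects the two nontrivial terms of~(\ref{F(x)-P3-formula}) exactly as in the passage from $\mathcal{M}(P_3)$ to $\mathcal{M}(P_1,P_2,P_3)$: in Case~2 the factor $\frac{1}{1-2x}$ becomes $\frac{1}{(1-x)^2}$ (same reason as above); in Case~3 each of the intervals $B$ and $D$ of Figure~\ref{case3-P3} may now contain only closers, respectively openers, whose partners lie under $[1,b_1]$, respectively $[a_n,2n]$, so $\bigl(1+\frac{x}{1-2x}\bigr)^2$ is replaced by $\frac{1}{(1-x)^2}$; and every interval $Y_j$ must be empty, since a nonempty $Y_j$ begins with a pass-through opener and ends with a pass-through closer, again producing a $P_2$, so $1+\frac{x^2}{1-2x}$ is replaced by $1$. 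Substituting into~(\ref{F(x)-P3-formula}) and simplifying yields the same generating function $\frac{1}{1-x}+\frac{x^2}{(1-x)^3}+\frac{x^3}{(1-x)^4}$, equal to~(\ref{A(x)-A050407}); extracting coefficients gives $1,1,2,5,11,21,36,57,85,\dots$, the sequence {\rm A050407} in~\cite{OEIS}, and the third family follows by the reverse operation.

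The main obstacle is the bookkeeping inside the two inherited structures: one must verify carefully that $P_2$ is indeed automatically avoided in Cases~1 and~3 of the $\mathcal{M}(P_1,P_3,P_4)$ analysis (so that only Case~2 is affected there), that avoidance of $P_4$ and $P_5$ does not silently re-restrict Case~2 in either derivation, and that no previously allowed configuration has been overlooked; a secondary, routine point is confirming that the two independent computations land on the same rational function. It may also be worth recording that $\frac{x^2}{(1-x)^4}=\sum_{n\ge 2}\binom{n+1}{3}x^n$, which suggests a direct bijective description of the non-$n$-crossing matchings in each of these families.
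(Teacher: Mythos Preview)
Your proposal is correct and follows the same overall strategy as the paper: reduce one quadruple to another via the reverse involution, then refine an already-established triple case analysis by imposing the remaining pattern. The only notable difference is your choice of base triple for $(P_1,P_2,P_3,P_4)$: you add $P_2$ to the $(P_1,P_3,P_4)$ structure of Theorem~\ref{thm-A116725}, whereas the paper adds $P_1$ to the $(P_2,P_3,P_4)$ structure of Theorem~\ref{thm-A000325}. Both routes land on the same decomposition
\[
A(x)=\frac{1}{1-x}+\frac{x^2}{(1-x)^3}+\frac{x^3}{(1-x)^4},
\]
and for $(P_2,P_3,P_4,P_5)$ your term-by-term modification of~(\ref{F(x)-P3-formula}) matches the paper's more compressed derivation $A(x)=\frac{1}{1-x}+\frac{x^2}{(1-x)^4}$. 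Your flagged ``main obstacle'' --- verifying that Case~3 of the $(P_1,P_3,P_4)$ structure already avoids $P_2$ --- is genuine but routine: since the $Y_j$'s are empty and the $X_j$'s and $C$ carry no openers, any candidate fourth arc of a staircase would need its opener under $[a_n,2n]$, where only closers live.
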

\begin{proof}
\textbf{Quadruples $(P_1, P_2, P_3, P_4)$ and $(P_1, P_2, P_3, P_5)$.}
Because of the reverse operation, we only need to consider $\mathcal{M}(P_1, P_2, P_3, P_4)$. Let $M \in \mathcal{M}_n(P_1, P_2, P_3, P_4)$. The only difference from the analysis of $\mathcal{M}(P_2, P_4, P_4)$ in Section~\ref{subsec-triple-A000325} is that in Case 3 (if there are at least one arc between $b_1$ and $a_n$), as shown in~\ref{case3-P3}, the closers of arcs with openers inside the arc $[1, b_1]$ can only appear in the intervals $B, C$. Such intervals, together with the arc $[1, b_1]$, give $\frac{x}{1-x} \cdot \frac{x}{1-x} = \left( \frac{x}{1-x} \right)^2$. Thus, the term $\frac{1}{1-x} \left( \frac{x}{1-x} \right)^2 \cdot \frac{x}{1-2x}$ in~\eqref{P2-P3-P4-formula} is replaced by $\frac{x^3}{(1-x)^4}$, where $X_1, \ldots, X_k$ are single arcs contributing $\frac{1}{1-x}$ and the last irreducible block contributing $\frac{x}{1-x}$. Hence, we obtain
\[
A(x)=\frac{1}{1-x}\left(1+\left(\frac{x}{1-x}\right)^2+\left(\frac{x}{1-x}\right)^3\right),
\]
which coincides with~\eqref{A(x)-A050407}.\\[-3mm]

\noindent
\textbf{Quadruple $(P_2, P_3, P_4, P_5)$.} Let $M\in \M_n(P_2, P_3, P_4, P_5)$. If $M$ is an $n$-crossing, then it contributes $\frac{1}{1-x}$ to $A(x)$, which also accounts for the empty matching. If $M$ is not an $n$-crossing, then referring to Figure~\ref{case3-P3} in Section~\ref{subsec-P3-avoid}, the intervals $B$ and $D$ contain only closers of arcs with openers inside the arc $[1, b_1]$ and openers of arcs with closers inside the arc $[a_n, 2n]$, respectively. The intervals $B$ and $D$, along with the arcs $[1, b_1]$ and $[a_n, 2n]$, contribute $\left( \frac{x}{1-x} \right)^2$. Meanwhile, there exist, possibly, arcs with openers under $[1, b_1]$ and closers under $[a_n, 2n]$, which contribute $\frac{1}{1-x}$. Moreover, the arcs between $b_1$ and $a_n$ form an $\ell$-noncrossing, where $\ell$ represents the number of such arcs, counted by $\frac{1}{1-x}$. Thus, this case totally contributes $\frac{x^2}{(1-x)^4}$ to $A(x)$. Thus, we obtain
\[
A(x)= \frac{1}{1-x}+\frac{x^2}{(1-x)^4},
\]
which coincides with~\eqref{A(x)-A050407}. The proof is complete.
\end{proof}

\subsection{Quadruples $(P_1, P_2, P_4, P_5)$ and $(P_1, P_3, P_4, P_5)$}

\begin{thm}\label{thm-P1-P2-P4-P5, P1-P3-P4-P5}
     The g.f.\ for $\M(P_1, P_2, P_4, P_5)$ and $\M(P_1, P_3, P_4, P_5)$ is given by 
      \begin{align}\label{A(x)-P1-P2-P4-P5, P1-P3-P4-P5}
          A(x)=\frac{1-4x+6x^2-3x^3-x^4}{(1-x)^3(1-2x)}.
     \end{align}
     
\end{thm}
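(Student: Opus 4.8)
The plan is to treat the two quadruples in parallel, in each case adjoining the one remaining forbidden pattern to a structural decomposition already established in the paper, and then reading off the generating function.

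Both classes sit inside $\mathcal{M}(P_1,P_4,P_5)$, whose structure is the one described via Figure~\ref{fig-P_1-P_4} in the proof of Theorem~\ref{thm-A000325}: such a matching is either an $n$-crossing, contributing $\frac{1}{1-x}$, or it decomposes into the initial crossing $[1,b_1],[2,b_1+1],\dots$, a first nontrivial block $I_1$ built from a maximal crossing $[x_1,y_1],\dots,[x_k,y_k]$ with $x_1>b_1$ together with closers of arcs whose openers lie under $[1,b_1]$ placed in the gaps $(x_i,x_{i+1})$ and $(x_k,y_1)$, and then further crossings $I_2,I_3,\dots$; the ``$I_1$ only'' and ``at least two blocks'' subcases contribute $\frac{x^2}{(1-x)^2(1-\frac{x}{1-x})}$ and $\frac{x^3}{(1-x)^3(1-\frac{x}{1-x})}$. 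For $\mathcal{M}(P_1,P_2,P_4,P_5)$ I would rerun this decomposition and impose $P_2$-avoidance: I expect the closers scattered through the gaps of $I_1$ to be forced into a single gap and the tail $I_2,I_3,\dots$ to be likewise constrained, so that one of the geometric factors degenerates, and then I would recompute the two subcase contributions and add $\frac{1}{1-x}$ to obtain~\eqref{A(x)-P1-P2-P4-P5, P1-P3-P4-P5}. For $\mathcal{M}(P_1,P_3,P_4,P_5)$ I would do the analogous thing — either adjoining $P_3$ to the same decomposition, or, perhaps more cleanly, adjoining $P_5$ to the analysis of $\mathcal{M}(P_1,P_3,P_4)$ from Theorem~\ref{thm-A116725}, starting from Figure~\ref{case3-P3}, where the intervals $B,D$ and $C,X_1,\dots,X_k$ are already tightly restricted and the $Y_i$ are empty — again expecting a single $\frac{1}{1-2x}$-type factor to collapse to a $\frac{1}{1-x}$-type factor, and again summing the (at most four) cases and simplifying.

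Throughout I would use that both $\mathcal{M}(P_1,P_2,P_4,P_5)$ and $\mathcal{M}(P_1,P_3,P_4,P_5)$ are closed under the reversal involution on matchings, since reversal fixes each of $P_1,P_2,P_3$ and interchanges $P_4$ with $P_5$; this symmetry keeps each case analysis internally consistent, roughly interchanging the role of openers under $[1,b_1]$ with that of closers under $[a_n,2n]$. Having produced the right-hand side of~\eqref{A(x)-P1-P2-P4-P5, P1-P3-P4-P5} by two different decompositions, I would then record that the two quadruples are Wilf-equivalent; if a more conceptual explanation is wanted, one could look for a bijection between the two classes refining the $P_2$--$P_3$ correspondence on Catalan-counted sets, but checking that such a map respects $P_1,P_4,P_5$ would demand the same care as the direct computation.

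The step I expect to be the main obstacle is pinning down exactly which interleavings of openers and closers survive in the cases with at least one interior block. One must distinguish a genuine occurrence of the length-$4$ pattern $P_2$ (resp.\ $P_5$) from a mere occurrence of one of its length-$3$ sub-patterns, which need not be forbidden, so the ad hoc arguments behind $P_3^3$-, $P_4^3$- and $P_5^3$-avoidance in Section~\ref{sub-length-3-avoiding} do not transfer verbatim; one must also exclude forbidden configurations that draw arcs from two different blocks. Once the surviving structure is identified, the generating-function algebra leading to~\eqref{A(x)-P1-P2-P4-P5, P1-P3-P4-P5} is routine.
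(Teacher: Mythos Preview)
What you have written is a plan, not a proof: you locate reasonable starting decompositions and then explicitly defer the step you yourself flag as the obstacle, namely determining exactly which interleavings survive once the extra pattern is imposed. That deferred step \emph{is} the content of the argument, so as it stands there is a genuine gap.

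For $(P_1,P_3,P_4,P_5)$, your second suggested route---working from Figure~\ref{case3-P3}---is essentially what the paper does, and it goes through cleanly. Cases~1 and~2 are unchanged from the $P_3$ analysis. In Case~3, $P_1$ already forces the $Y_i$ to be empty and restricts $B,D$ to closers (resp.\ openers) attached to $[1,b_1]$ (resp.\ $[a_n,2n]$); adjoining $P_4$ and $P_5$ then forces each of $C,X_1,\dots,X_k$ to be a bare arc, so the arcs strictly between $b_1$ and $a_n$ form a noncrossing and Case~3 contributes $\bigl(\tfrac{x}{1-x}\bigr)^3$. One reads off $A(x)=\tfrac{1}{1-x}\bigl(1+\tfrac{x^2}{1-2x}+\tfrac{x^3}{(1-x)^2}\bigr)$, which simplifies to~\eqref{A(x)-P1-P2-P4-P5, P1-P3-P4-P5}.

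For $(P_1,P_2,P_4,P_5)$, the paper does \emph{not} refine the $(P_1,P_4,P_5)$ block picture. It argues directly and more simply: if some arc lies strictly between $b_1$ and $a_n$, then every irreducible block of $M$ is a pure crossing and there are at least three of them, contributing $\sum_{j\ge 3}\bigl(\tfrac{x}{1-x}\bigr)^j$; if no such interior arc exists, one is in the Figure~\ref{case2-P3} configuration with the further $P_2$-constraint that closers precede openers between $b_1$ and $a_n$, contributing $\tfrac{x^2}{(1-x)^3}$. Your expectation that $P_2$ merely ``forces the closers in the gaps of $I_1$ into a single gap'' understates the collapse: as soon as there is any interior arc, $P_2$ together with $P_1,P_4,P_5$ empties \emph{all} the gaps and reduces every block to a crossing. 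Your route can be made to work, but only after rediscovering this simpler description through extra case-splitting; you would also need to recover the paper's middle case (interior openers with closers under $[a_n,2n]$, and arcs running from under $[1,b_1]$ to under $[a_n,2n]$), which is not visible in the $(P_1,P_4,P_5)$ decomposition as you have stated it.
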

\begin{proof}
\textbf{Quadruple $(P_1, P_2, P_4, P_5)$.} Let $M \in \mathcal{M}_n(P_1, P_2, P_4, P_5)$. If $M$ is an $n$-crossing, then it contributes $\frac{1}{1-x}$ to $A(x)$, which also accounts for the empty matching. If $M$ is not an $n$-crossing and there is no arc between $b_1$ and $a_n$, then there may exist closers of arcs with openers inside the arc $[1, b_1]$ and openers of arcs with closers inside the arc $[a_n, 2n]$ without overlapping. Meanwhile, there may exist arcs with openers inside $[1, b_1]$ and closers inside $[a_n, 2n]$. Thus, this case totally contributes $\left( \frac{x}{1-x} \right)^2 \cdot \frac{1}{1-x} = \frac{x^2}{(1-x)^3}$ to $A(x)$. If there is at least one arc between $b_1$ and $a_n$, then each irreducible block, denoted by $I_j$, of $M$ is formed by an $|I_j|$-crossing (note that there are at least three irreducible blocks), which contributes $\sum_{j \geq 3} \left( \frac{x}{1-x} \right)^j = \frac{x^3}{(1-x)^2(1-2x)}$ to $A(x)$ in total. Thus, we obtain 
\[
A(x)=\frac{1}{1-x}+\frac{x^2}{(1-x)^3}+\frac{x^3}{(1-x)^3 \left(1-\frac{x}{1-x}\right)},
\]
which coincides with~\eqref{A(x)-P1-P2-P4-P5, P1-P3-P4-P5}.

\noindent
\textbf{Quadruple $(P_1, P_3, P_4, P_5)$.} Let $M \in \mathcal{M}_n(P_1, P_3, P_4, P_5)$. The only difference from our analysis of $\mathcal{M}_n(P_3)$ is that in Case 3 (there is at least one arc between $b_1$ and $a_n$), the intervals $B$ (resp., $D$) can only have closers (resp., openers) of arcs with openers (resp., closers) inside the arc $[1, b_1]$ (resp., $[a_n, 2n]$). The intervals $B$ and $D$, along with the arcs $[1, b_1]$ and $[a_n, 2n]$, contribute $\left( \frac{x}{1-x} \right)^2$. Meanwhile, arcs between $b_1$ and $a_n$ form an $\ell$-noncrossing with $\ell \geq 1$ (the number of such arcs), contributing $\frac{x}{1-x}$. Thus, this case totally contributes $\left( \frac{x}{1-x} \right)^3$. Therefore, substituting the distinct factors in~\eqref{F(x)-P3-formula}, we obtain
\[
A(x)=\frac{1}{1-x}\left(1+\frac{x^2}{1-2x}+\frac{x^3}{(1-x)^2}\right),
\]
which coincides with~\eqref{A(x)-P1-P2-P4-P5, P1-P3-P4-P5}. The proof is complete.
\end{proof}

\section{Avoidance of five patterns}\label{5-sec}

\begin{thm}\label{thm-P1-P2-P3-P4-P5}
     The g.f.\ for $\M(P_1, P_2, ,P_3, P_4, P_5)$ is given by 
      \begin{align}\label{A(x)-P1-P2-P3-P4-P5}
          A(x)=\frac{1-2x+2x^2+x^3}{(1-x)^3}.
     \end{align}
     The corresponding sequence is {\rm A002522} in {\rm \cite{OEIS}}.
\end{thm}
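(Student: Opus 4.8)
The plan is to follow the three-case template introduced for $\mathcal{M}(P_3)$ in Section~\ref{subsec-P3-avoid} and reused throughout the paper, now imposing avoidance of all of $P_1,\dots,P_5$ at once. Writing $M=\{[1,b_1],\dots,[a_n,2n]\}\in\mathcal{M}_n(P_1,\dots,P_5)$, I split into \textbf{Case 1}: $M$ is an $n$-crossing; \textbf{Case 2}: the arcs $[1,b_1]$ and $[a_n,2n]$ are disjoint and no arc lies strictly between $b_1$ and $a_n$; \textbf{Case 3}: the arcs $[1,b_1]$ and $[a_n,2n]$ are disjoint and at least one arc lies strictly between them. Case 1 contributes $\frac{1}{1-x}$, which also accounts for the empty matching.

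For Case 2 I would reuse the structure already established for $\mathcal{M}(P_1,P_2,P_3,P_4)$ in Theorem~\ref{thm-A050407}: such a matching consists of $[1,b_1]$, $[a_n,2n]$, a possibly empty block of nested arcs with openers $(b_1-i)(b_1-i+1)\cdots(b_1-1)$ and closers $(a_n+1)(a_n+2)\cdots(a_n+i)$ wrapping everything, a crossing of arcs whose openers lie under $[1,b_1]$ and whose closers sit just to the right of $b_1$, and a crossing of arcs whose closers lie under $[a_n,2n]$ and whose openers sit just to the left of $a_n$, with the $P_2$-restriction forcing the former closers to precede the latter openers between $b_1$ and $a_n$. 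The point to verify is that forbidding $P_5$ as well changes nothing in this case, since each such matching is a "generalized crossing equipped with two pendant crossings" and a short inspection shows it cannot realize $P_5$. Encoding the three independent multiplicities (the value of $i$ and the sizes of the two pendant crossings) gives the contribution $\frac{x^2}{(1-x)^3}$.

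For Case 3 --- the heart of the argument --- I would show that forbidding all five patterns collapses the structure of Figure~\ref{case3-P3} to a minimal one: the intervals $X_1,\dots,X_k$ must be absent (equivalently, exactly one arc lies strictly between $b_1$ and $a_n$), the $Y_i$'s vanish (to avoid $P_1$), and the intervals flanking the middle arc may contain only pendant closers of arcs rooted under $[1,b_1]$ and pendant openers of arcs rooted under $[a_n,2n]$, all forced into crossings (to avoid $P_2,P_3,P_4$). Thus Case 3 retains exactly three "$x$-factors" --- the arcs $[1,b_1]$, the unique middle arc, and $[a_n,2n]$ --- together with three independent geometric multiplicities, contributing $\frac{x^3}{(1-x)^3}$; equivalently, the Case-3 term $\frac{x^3}{(1-x)^4}$ from the quadruple $(P_1,P_2,P_3,P_4)$ simply loses the factor $\frac{1}{1-x}$ that counted the strings of $X_i$-arcs. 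Adding the three contributions,
\[
A(x)=\frac{1}{1-x}+\frac{x^2}{(1-x)^3}+\frac{x^3}{(1-x)^3}=\frac{(1-x)^2+x^2+x^3}{(1-x)^3}=\frac{1-2x+2x^2+x^3}{(1-x)^3},
\]
which is~\eqref{A(x)-P1-P2-P3-P4-P5}; reading off coefficients gives $|\mathcal{M}_n|=(n-1)^2+1$ for $n\ge1$ with $|\mathcal{M}_0|=1$, matching A002522.

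The main obstacle is the Case-3 classification. One must verify, by a finite but slightly delicate check, that no fourth arc can be added to a "$2$-crossing plus pendant arc" (or "short arc plus crossing plus short arc") configuration sitting between $b_1$ and $a_n$ without creating one of $P_1,\dots,P_5$; this is precisely what forces the middle region down to a single arc with crossings of pendant openers and closers on either side. By contrast, Cases 1 and 2 are routine given the earlier theorems, so the whole proof ultimately reduces to this pattern-exclusion bookkeeping in Case 3.
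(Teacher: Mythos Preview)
Your three–case template and your Case~1 and Case~2 computations match the paper's proof (apart from a slip: the spanning arcs with openers $b_1-i,\dots,b_1-1$ and closers $a_n+1,\dots,a_n+i$ form an $i$-\emph{crossing}, not a nested block). The problem is your structural claim in Case~3.

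You assert that forbidding all five patterns forces \emph{exactly one} arc strictly between $b_1$ and $a_n$, and that passing from the quadruple $(P_1,P_2,P_3,P_4)$ to the quintuple kills the factor $\tfrac{1}{1-x}$ ``that counted the strings of $X_i$-arcs''. This is false: the $n$-noncrossing $\{[1,2],[3,4],\dots,[2n-1,2n]\}$ avoids every $P_i$ (each $P_i$ contains a crossing) and, for $n\ge 4$, has $n-2\ge 2$ arcs strictly between $b_1$ and $a_n$. What actually happens (and what the paper does, via the $(P_1,P_3,P_4,P_5)$ analysis) is that the middle region may contain an $\ell$-noncrossing of single arcs for any $\ell\ge 1$; what $P_5$ forbids is pendant \emph{closers inside} the leftmost middle arc $C$ (an arc $[o,c]$ with $o<b_1<x<c<y$ together with $[1,b_1],[x,y],[a_n,2n]$ realizes $P_5$). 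So the $\tfrac{1}{1-x}$ that disappears when you add $P_5$ to $(P_1,P_2,P_3,P_4)$ is the ``closers under $C$'' factor, not the $X_i$ factor.

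Your final formula is correct only by the coincidence that the factor you removed and the factor that should be removed are both $\tfrac{1}{1-x}$; the Case~3 argument as written does not establish it. Replace your one-middle-arc claim by ``the arcs between $b_1$ and $a_n$ form an $\ell$-noncrossing with $\ell\ge 1$, with no pendant closers or openers beneath them and no spanning arcs'', and the contribution $\tfrac{x}{1-x}\cdot\tfrac{x}{1-x}\cdot\tfrac{x}{1-x}=\tfrac{x^3}{(1-x)^3}$ is then genuinely justified.
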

\begin{proof}
   Let $M \in \mathcal{M}_n(P_1, P_2, P_3, P_4, P_5)$. The items of Case 1 ($M$ is an $n$-crossing) and Case~3 (there is at least one arc between $b_1$ and $a_n$) are the same as those in the analysis of the set $\mathcal{M}_n(P_1, P_3, P_4, P_5)$, while in Case 2 ($M$ is not an $n$-crossing and there are no arcs between $b_1$ and $a_n$), closers and openers between $b_1$ and $a_n$ cannot be shuffled, which gives $\left( \frac{x}{1-x} \right)^2 \cdot \frac{1}{1-x} = \frac{x^2}{(1-x)^3}$ in this case, where $\frac{1}{1-x}$ represents arcs with openers inside the arc $[1, b_1]$ and closers inside the arc $[a_n, 2n]$. Thus, we obtain
    \[
    A(x)=\frac{1}{1-x}\left(1+\left(\frac{x}{1-x}\right)^2+\frac{x^3}{(1-x)^2}\right),
    \]
    which coincides with~\eqref{A(x)-P1-P2-P3-P4-P5}. We complete the proof.
\end{proof}

\end{document}